\newtheorem{theorem}{Theorem}[section]
\newtheorem{lemma}[theorem]{Lemma}
\theoremstyle{definition}
\theoremstyle{remark}
\newtheorem{remark}[theorem]{Remark}
\numberwithin{equation}{section}
\newcommand{\bx}{\mathbf{x}}
\newcommand{\bz}{\mathbf{z}}
\newcommand{\bk}{\mathbf{k}}
\newcommand{\bj}{\mathbf{j}}
\newcommand{\bl}{\bm{\ell}}
\newcommand{\bff}{\mathbf{f}}
\newcommand{\bg}{\mathbf{g}}
\newcommand{\bu}{\mathbf{u}}
\newcommand{\bv}{\mathbf{v}}
\newcommand{\bw}{\mathbf{w}}
\newcommand{\bn}{\mathbf{n}}
\newcommand{\dd}{\,\mathrm{d}}
\newcommand{\dx}{\, \mathrm{d} \mathbf{x}}
\newcommand{\ds}{\, \mathrm{d}s}
\newcommand{\divx}{\mathrm{div}_{\mathbf{x}}}
\begin{document}


\title[ ]{Complex analytic solutions for the {TQG} model}

%
%
\author{Prince Romeo Mensah}
\address{Institut f\"ur Mathematik,
Technische Universit\"at Clausthal,
Erzstra{\ss}e 1,
38678 Clausthal-Zellerfeld, Germany}
\email{prince.romeo.mensah@tu-clausthal.de \\ orcid ID: 0000-0003-4086-2708}

\subjclass[2020]{76B03, 35B65, 37N10}

\date{\today}


\keywords{Thermal quasi-geostrophic, Oceanography,  Holomorphic function, Gevrey class, Analyticity}

\begin{abstract} 
We present a condition under which the thermal quasi-geostrophic (TQG) model possesses a solution that is holomorphic in time with values in the Gevrey space of complex analytic functions. This can be seen as the complex extension of the work by Levermore and Oliver (1997) for the generalised Euler equation but applied to the TQG model.

\end{abstract}

\maketitle
\section{Introduction}
\label{sec:intro}
There are several 2-dimensional models for geophysical fluid flows where horizontal length scales are very large compared to their vertical scale. One such model is the Thermal Quasi-Geostrophic (TQG) model where there is a near balance between the Coriolis force, the hydrostatic pressure gradient force and the buoyancy gradient force. We refer to \cite{beron2021geometry, beron2021nonlinear, holm2021stochastic} for the historical account, the derivation and some simulations. In precise form, the TQG model posed on the space-time spatially periodic cylinder  $I\times\mathbb{T}^2$ where $I:=(0,T), T>0$,
is a coupled system of equations governed by the evolution of the buoyancy $b:(t,\bx)\in I\times\mathbb{T}^2\mapsto b(t,\bx)\in \mathbb{R}$ and the potential vorticity $q:(t,\bx)\in I\times\mathbb{T}^2\mapsto q(t,\bx)\in \mathbb{R}$ in the following way:
\begin{align}
\partial_t b + (\bu \cdot \nabla) b =0,
\label{ce}
\\
\partial_t q + (\bu\cdot \nabla)( q -b) = -(\bu_h \cdot \nabla) b,  \label{me}
\\
b_0(x)=b(0,x),\qquad q_0(x)=q(0,x),
\label{initialTQG}
\end{align}
where
\begin{align}
\label{constrt}
\bu =\nabla^\perp \psi, 
\qquad
\bu_h =\frac{1}{2} \nabla^\perp h,
\qquad
q=(\Delta -1) \psi +f.
\end{align}
In \eqref{constrt}, the vector $\bu:(t,\bx)\in I\times\mathbb{T}^2\mapsto \bu(t,\bx)\in \mathbb{R}^2$ is the velocity field, $\psi:(t,\bx)\in I\times\mathbb{T}^2\mapsto \psi(t,\bx)\in \mathbb{R}$ is the streamfunction, $h:(t,\bx)\in I\times\mathbb{T}^2\mapsto h(t,\bx)\in \mathbb{R}$ is the spatial variation around a constant bathymetry
profile and $f:(t,\bx)\in I\times\mathbb{T}^2\mapsto f(t,\bx)\in \mathbb{R}$ is the spatial variation around a constant background rotation rate. Note that since $\nabla^\perp=(-\partial_{x_2},\partial_{x_1})$, both $\bu$ and $\bu_h$ are incompressible vector fields.

The existence of unique local-in-time strong solutions of \eqref{ce}-\eqref{constrt} have recently been shown in \cite{crisan2023theoretical1} and in \cite{crisan2023theoretical2} for the stochastic variant. Here, `strong solutions' are those having a minimal spatial regularity of $(b(t), q(t))\in W^{3,2}(\mathbb{T}^2)\times W^{2,2}(\mathbb{T}^2)$ for a corresponding dataset $(\bu_h,f,b_0,q_0)$ satisfying
\begin{align*}
(\bu_h, f)\in W^{3,2}_{\divx}(\mathbb{T}^2)\times W^{2,2}(\mathbb{T}^2),
\qquad
(b_0, q_0)\in W^{3,2}(\mathbb{T}^2)\times W^{2,2}(\mathbb{T}^2) 
\end{align*}
where $W^{3,2}_{\divx}(\mathbb{T}^2)$ is the space of divergence-free vector fields in $W^{3,2}(\mathbb{T}^2)$.
Since we work on a torus, one can use the standard method of taking partial spatial derivative of the equation and applying commutator estimates to the advection terms (see \cite[Proposition 2.1]{majda1984compressible} and \cite[Appendix]{klainerman1981singular} for instance) to extend this solution to
\begin{align*}
(b(t), q(t))\in W^{k+1,2}(\mathbb{T}^2)\times W^{k,2}(\mathbb{T}^2)\quad \text{for all } k\geq2.
\end{align*}
Consequently, smooth solutions $(b(t), q(t))\in C^{\infty}(\mathbb{T}^2)\times C^{\infty}(\mathbb{T}^2)$ also exist locally in time provided the data are also smooth. The existence of solutions in the larger class $(b(t), q(t))\in W^{k+1,2}(\mathbb{T}^2)\times W^{k,2}(\mathbb{T}^2)$ for any $k<2$ as well as the existence of global-in-time solution in any class, however, remain interesting open problems.
 
In this work, we are interested in studying the analytical property of smooth solutions of \eqref{ce}-\eqref{constrt}. In particular, we give a condition under which smooth solutions have convergent Taylor series. This will follow by showing that smooth solutions belong to the \textit{Gevrey class} of analytic functions under the condition that the radius of analyticity solves a suitable ordinary differential equation. The precise statement of this result is given below in Theorem \ref{thm:main}. See \cite{biswas2014gevrey} for a related result for the supercritical surface quasi-geostrophic equation,
\cite{kukavica2010existence} for the hydrostatic Euler equations and \cite{bardos1976analyticity, biswas2022space, cappiello2015some, kukavica2009radius, kukavica2011domain, levermore1997analyticity} for the (generalised) Euler equation

\section{Preliminaries}
\label{sec:prelim}
\subsection{Notation}
We consider a space-time cylinder consisting of  spatial points $ \mathbf{x}=(x_1,\ldots, x_d) $ on the  torus $\mathbb{T}^d=(\mathbb{R}/2\pi\mathbb{Z})^d$, $d=2,3$ with periodic boundary condition  and a time variable  $t\in [0,T]$ where $T>0$ is fixed and arbitrary. For functions $F$ and $G$, we write $F \lesssim G$  if there exists  a generic constant $c>0$  such that $F \leq c\,G$.
We also write $F \lesssim_p G$ if the  constant  $c(p)>0$ depends on a variable $p$. If $F \lesssim G$ and $G\lesssim F$ both hold (respectively,  $F \lesssim_p G$ and $G\lesssim_p F$), we use the notation $F\sim G$ (respectively, $F\sim_p G$).
The symbol $\vert \cdot \vert$ may be used in four different contexts. For a scalar function $f\in \mathbb{R}$, $\vert f\vert$ denotes the absolute value of $f$. For a vector $\bff\in \mathbb{R}^d$, $\vert \bff \vert$ denotes the Euclidean norm of $\bff$. For a square matrix $\mathbb{F}\in \mathbb{R}^{d\times d}$, $\vert \mathbb{F} \vert$ shall denote the Frobenius norm $\sqrt{\mathrm{trace}(\mathbb{F}^T\mathbb{F})}$. Also, if $S\subseteq  \mathbb{R}^d$ is  a (sub)set, then $\vert S \vert$ is the $d$-dimensional Lebesgue measure of $S$. Lastly, for any two normed spaces $(V_1,\Vert \cdot\Vert_{V_1})$ and  $(V_2,\Vert \cdot\Vert_{V_2})$ with $v_1\in V_1$ and $v_2\in V_2$, we let
\begin{align*}
\Vert (v_1\,,\,v_2)\Vert_{V_1\times V_2}^p:=\Vert v_1\Vert_{V_1}^p+\Vert v_2\Vert_{V_2}^p
\end{align*}
represent the product norm for any $p\in[1,\infty)$.

\subsection{The functional setting}
Let $C^\infty_{\divx}(\mathbb{T}^d)$ denote the space of all divergence-free smooth periodic functions. For $r\geq0$, let the Hilbert space $(H^r(\mathbb{T}^d), \Vert \cdot\Vert_{H^r})$ represent the $L^2$-homogeneous Sobolev space of mean-free functions  defined as
\begin{equation}
\begin{aligned}
\label{hsSpacehomoXX}
 H^r(\mathbb{T}^d)=\Bigg\{
\bff(\bx)=& \sum_{\bk \in \mathbb{Z}^d_{\bm{0}}}\hat{\bff}_{\bk} e^{i\bk\cdot\bx}
 \in
L^2(\mathbb{T}^d)
\, \bigg\vert\,
\hat{\bff}_{\bk}
=\int_{\mathbb{T}^d} e^{-i\bk \cdot\bx}\bff(\bx) \dx \in \mathbb{C}^d,
\\&
  \overline{\hat{\bff}}_{\bk}= \hat{\bff}_{-\bk}, \quad \hat{\bff}_{\bm{0}}=\bm{0}, \quad 
\Vert \bff \Vert_{ H^r} =\bigg( \sum_{\bk\in\mathbb{Z}^d_{\bm{0}}}  \vert \bk\vert^{2r} \vert\hat{\bff}_{\bk}\vert^2 \bigg)^{1/2}<\infty
\Bigg\}.
\end{aligned}
\end{equation}
Here, the $\hat{\bff}_{\bk}$s are   the Fourier coefficients of the function $\bff\in L^2(\mathbb{T}^d)$ and $\mathbb{Z}^d_{\bm{0}}:= 2 \pi \mathbb{Z}^d\setminus\{\bm{0}\}$. Furthermore, note that $\hat{\bff}_{\bm{0}}=\bm{0}$ is equivalent to saying that elements $\bff\in  H^r(\mathbb{T}^d)$ in  \eqref{hsSpacehomoXX} are mean-free, i.e., $\int_{\mathbb{T}^d}\bff(\bx)\dx=0$.

\noindent With \eqref{hsSpacehomoXX} in hand, we now define  $(\mathbb{H}^r(\mathbb{T}^d), \Vert \cdot\Vert_{H^r})$ as the subclass of $(H^r(\mathbb{T}^d), \Vert \cdot\Vert_{H^r})$ given by
\begin{equation}
\begin{aligned}
\label{hsSpacehomoYY}
 \mathbb{H}^r(\mathbb{T}^d)=\bigg\{
\bff(\bx)&= \sum_{\bk \in \mathbb{Z}^d_{\bm{0}}}\hat{\bff}_{\bk} e^{i\bk\cdot\bx}
\in
H^r(\mathbb{T}^d)
\, \big\vert\,
 \hat{\bff}_\bk\cdot \bk=0
\bigg\}.
\end{aligned}
\end{equation}
Here, the condition $\hat{\bff}_\bk\cdot \bk=0$ corresponds to the divergence-free condition $\divx \bff=0$ in frequency space. Thus,  $\mathbb{H}^r(\mathbb{T}^d) $ is the subclass of divergence-free vectors in $H^r(\mathbb{T}^d)$.
Next, if we consider the positive, self-adjoint  operator $\Lambda=-\Delta$ which is known to possess a nondecreasing sequence  $\{\lambda_j\}_{j\in \mathbb{N}}$ of strictly positive eigenvalues that approach infinity as $j\rightarrow\infty$, see \cite{lions1996mathematical}. Then for any $r\in \mathbb{R}$, we can define its fractional power $\Lambda^r$ as the mapping
\begin{align*}
\bff(\bx)&= \sum_{\bk \in \mathbb{Z}^d_{\bm{0}}}\hat{\bff}_{\bk} e^{i\bk\cdot\bx}
\quad \mapsto \quad 
\Lambda^r\bff
=
\sum_{\bk\in \mathbb{Z}^d_{\bm{0}}}  \vert \bk\vert^{2r}  \hat{\bff}_{\bk} e^{i\bk\cdot\bx}
= \sum_{\bk\in \mathbb{Z}^d_{\bm{0}}}  \lambda_{\vert\bk\vert}^r  \langle\bff(\bx), e^{i\bk \cdot\bx}\rangle_{L^2} e^{i\bk\cdot\bx}
\end{align*}
with $\Lambda^0=I$ so that
\begin{align*} 
\Vert \Lambda^r\bff\Vert_{L^2}^2= \sum_{\bk\in \mathbb{Z}^d_{\bm{0}}}  \vert \bk\vert^{4r} \vert \hat{\bff}_{\bk} \vert^2
= \sum_{\bk\in \mathbb{Z}^d_{\bm{0}}}  \lambda_{\vert\bk\vert}^{2r}\vert  \hat{\bff}_{\bk} \vert^2
=
\Vert \bff \Vert_{ H^{2r}}^2.
\end{align*}

\noindent A special subclass of the space  \eqref{hsSpacehomoYY}  are the Gevrey-class-$(s,r,\varphi)$ spaces $G^{s,r}_{\varphi }(\mathbb{T}^d)$ given by
\begin{align*}
D(e^{\varphi  \Lambda^{1/2s}} : \mathbb{H}^r(\mathbb{T}^d)) \equiv \big\{ \bff(\bx)\in \mathbb{H}^r(\mathbb{T}^d) \, : \, \Vert e^{\varphi  \Lambda^{1/2s}} \bff(\bx)  \Vert_{H^r}<\infty \big \}
\end{align*}
for real numbers $s>0$, $r\geq 0$ and a given time-dependent function $\varphi:I \rightarrow [0,\infty)$. The space \eqref{hsSpacehomoYY} is recovered when $\varphi=0$ whereas 
\begin{align*}
D(e^{\varphi  \Lambda^{1/2s}} : \mathbb{H}^r(\mathbb{T}^d))
\subseteq
C^\infty_{\divx}(\mathbb{T}^d) \subseteq \mathbb{H}^r(\mathbb{T}^d)
\end{align*}
when $\varphi>0$. A similar relation holds between $D(e^{\varphi  \Lambda^{1/2s}} : H^r(\mathbb{T}^d))$, $C^\infty(\mathbb{T}^d)$ and $H^r(\mathbb{T}^d)$ as defined in \eqref{hsSpacehomoXX}. See \cite{levermore1997analyticity} for further details. In particular, the spaces $D(e^{\varphi  \Lambda^{1/2s}} : H^r(\mathbb{T}^d))$ are nested just as  $H^r(\mathbb{T}^d)$ are, in the sense that the embedding $D(e^{\varphi  \Lambda^{1/2s}} : H^{r_2}(\mathbb{T}^d)) \hookrightarrow D(e^{\varphi  \Lambda^{1/2s}} : H^{r_1}(\mathbb{T}^d))$ is always continuous for $r_1\leq r_2$.

With the above preliminaries in hand, we now present some useful results for this work. Firstly,
letting  $\langle\cdot,\cdot\rangle$ represent the $L^2$ inner product  we have the following generalisation of the crucial result from \cite[Lemma 8]{levermore1997analyticity}.
\begin{lemma}
\label{lem:ConvEst}
For $\bu,\bv\in D( e^{\varphi \Lambda^{1/2}} :  {H}^{r+1/2}(\mathbb{T}^d))$ and $\bw\in D( e^{\varphi \Lambda^{1/2}} :  {H}^{r+1}(\mathbb{T}^d))$  with $r>d/2+3/2$, we have that
\begin{align*}
\vert\langle  \Lambda^{r/2}e^{\varphi \Lambda^{1/2}}& (\bu\cdot\nabla \bv) \,,\, \Lambda^{r/2}e^{\varphi \Lambda^{1/2}} \bw \rangle\vert
\lesssim 
\Vert \bu\Vert_{H^r}
 \Vert\bv\Vert_{H^r}  
\Vert \bw\Vert_{H^r} 
 \\&+
\varphi 
\Big(
\Vert e^{\varphi \Lambda^{1/2}} \bv\Vert_{H^r}
 \Vert e^{\varphi \Lambda^{1/2}}\bu\Vert_{H^{r+\frac{1}{2}}}  
+
\Vert e^{\varphi \Lambda^{1/2}} \bu\Vert_{H^{r }}
 \Vert e^{\varphi \Lambda^{1/2}}\bv\Vert_{H^{r+\frac{1}{2}}} 
 \Big) 
\Vert  e^{\varphi \Lambda^{1/2}}\bw\Vert_{H^{r+\frac{1}{2}}} 
\end{align*}
\end{lemma}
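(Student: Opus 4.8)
\emph{Proof strategy.} The plan is to pass to Fourier series and peel the Gevrey weight off the trilinear expression by elementary inequalities, in the spirit of \cite{levermore1997analyticity}. Writing $\bff=\sum_{\bk}\hat\bff_{\bk}e^{i\bk\cdot\bx}$ and using $\widehat{(\bu\cdot\nabla\bv)}_{\bk}=i\sum_{\bj+\bl=\bk}(\hat\bu_{\bj}\cdot\bl)\,\hat\bv_{\bl}$, the quantity to be bounded is, with $m(\bk):=|\bk|^{r}e^{\varphi|\bk|}$ the symbol of $\Lambda^{r/2}e^{\varphi\Lambda^{1/2}}$,
\[
\Big|\,i\sum_{\bj+\bl=\bk} m(\bk)^{2}\,(\hat\bu_{\bj}\cdot\bl)\,\big(\hat\bv_{\bl}\cdot\overline{\hat\bw_{\bk}}\big)\,\Big|,
\]
and I would begin from the decomposition $m(\bk)^{2}=|\bk|^{2r}+|\bk|^{2r}\big(e^{2\varphi|\bk|}-1\big)$, which separates a weight-free ``Eulerian'' contribution from the genuinely Gevrey one. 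The first summand reproduces the classical pairing $\langle\Lambda^{r/2}(\bu\cdot\nabla\bv),\Lambda^{r/2}\bw\rangle$: since $\bu$ is incompressible the leading transport term loses no derivative (it is skew in its last two arguments and vanishes when $\bv=\bw$), so this contribution reduces to a Kato--Ponce-type commutator and is controlled by $\|\bu\|_{H^{r}}\|\bv\|_{H^{r}}\|\bw\|_{H^{r}}$ through the embedding $H^{r-1}\hookrightarrow L^{\infty}$; this needs only $r>d/2+1$ and yields the first term on the right-hand side.

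For the Gevrey part I would use the elementary bound $e^{2\varphi|\bk|}-1\le 2\varphi|\bk|\,e^{2\varphi|\bk|}$ (which is just $e^{x}\le 1+xe^{x}$ for $x\ge 0$), thereby extracting the overall factor $\varphi$ and one extra power of frequency and dominating the second summand by
\[
2\varphi\sum_{\bj+\bl=\bk}|\bk|^{2r+1}e^{2\varphi|\bk|}\,|\bl|\,|\hat\bu_{\bj}|\,|\hat\bv_{\bl}|\,|\hat\bw_{\bk}|.
\]
Next I would redistribute the weight and the frequencies by means of $|\bk|^{s}\lesssim|\bj|^{s}+|\bl|^{s}$ and $e^{\varphi|\bk|}\le e^{\varphi|\bj|}e^{\varphi|\bl|}$, leaving one factor $|\bk|^{r+1/2}e^{\varphi|\bk|}$ attached to $\hat\bw_{\bk}$ (so as to recover $\|e^{\varphi\Lambda^{1/2}}\bw\|_{H^{r+1/2}}$) and assigning the remaining $|\bk|^{r+1/2}$ either to $\hat\bu_{\bj}$ or to $\hat\bv_{\bl}$; in the interaction where $\hat\bv$ sits at high frequency one moreover rewrites $\hat\bu_{\bj}\cdot\bl=\hat\bu_{\bj}\cdot\bk$, legitimate because $\hat\bu_{\bj}\cdot\bj=0$ by incompressibility, which shifts the surviving derivative onto $\bw$ and prevents a full-derivative loss on $\bv$. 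Each resulting triple sum $\sum_{\bj+\bl=\bk}a_{\bj}b_{\bl}c_{\bk}$ is then estimated by Young's convolution inequality, $\le\|a\|_{\ell^{2}}\|b\|_{\ell^{1}}\|c\|_{\ell^{2}}$ in the appropriate order, the $\ell^{1}$-factor being absorbed into a weighted $H^{r}$-norm against a series $\sum_{\bk}|\bk|^{-2\sigma}$ that converges exactly when $2\sigma>d$; keeping track of the largest $\sigma$ that occurs accounts for the hypothesis $r>d/2+3/2$, and re-summing returns the two stated products $\varphi\,\|e^{\varphi\Lambda^{1/2}}\bv\|_{H^{r}}\|e^{\varphi\Lambda^{1/2}}\bu\|_{H^{r+1/2}}\|e^{\varphi\Lambda^{1/2}}\bw\|_{H^{r+1/2}}$ and $\varphi\,\|e^{\varphi\Lambda^{1/2}}\bu\|_{H^{r}}\|e^{\varphi\Lambda^{1/2}}\bv\|_{H^{r+1/2}}\|e^{\varphi\Lambda^{1/2}}\bw\|_{H^{r+1/2}}$, according to which of $\bu,\bv$ absorbs the leftover half-derivative.

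The main difficulty is precisely this last redistribution: the Gevrey weight must be removed from the leading term while a full copy of $\Lambda^{r/2}e^{\varphi\Lambda^{1/2}}$ is kept on $\bw$ at level $r+\tfrac12$ and only \emph{half} a derivative, not the whole one, is lost from $\nabla\bv$. This is exactly where the sharp splitting of the exponential, the interpolation of the leftover frequency into two halves, and the incompressibility of $\bu$ all have to be used together, and where the threshold $r>d/2+3/2$ enters; by comparison the weight-free Eulerian term and the convolution bookkeeping are routine.
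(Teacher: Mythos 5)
Your decomposition $m(\bk)^2=|\bk|^{2r}+|\bk|^{2r}\bigl(e^{2\varphi|\bk|}-1\bigr)$ is not the paper's split and, more importantly, neither half of it closes. For the weight-free part, your claim that $\langle\Lambda^{r/2}(\bu\cdot\nabla\bv),\Lambda^{r/2}\bw\rangle\lesssim\Vert\bu\Vert_{H^r}\Vert\bv\Vert_{H^r}\Vert\bw\Vert_{H^r}$ rests on the transport piece being "skew and vanishing when $\bv=\bw$" --- but here $\bv\neq\bw$ (in the application $\bv=b$, $\bw=q$), and the lemma does not assume $\bu$ divergence-free (the remark after the lemma stresses precisely that no incompressibility is used). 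The surviving term $\langle(\bu\cdot\nabla)\Lambda^{r/2}\bv,\Lambda^{r/2}\bw\rangle$ is genuinely one derivative too strong for $H^r\times H^r\times H^r$: take $\bu$ a single low mode and $\bv,\bw$ concentrated near frequency $N$; the pairing is of size $N^{2r+1}$ against $N^{2r}$ on your right-hand side, and the substitution $\hat\bu_{\bj}\cdot\bl=\hat\bu_{\bj}\cdot\bk$ does not help since $|\bk|\sim|\bl|$ there. The paper's corresponding term is $I_1=\langle(\bu\cdot\nabla)(\Lambda^{r/2}e^{\varphi\Lambda^{1/2}}\bv),\Lambda^{r/2}e^{\varphi\Lambda^{1/2}}\bw\rangle$, and the key idea you are missing is that it is \emph{not} put into the weight-free bound at all: one splits the single gradient derivative as $|\bk|\leq k^{1/2}(\ell j)^{1/2}$ and converts the leftover low-frequency factor on $\bu$ via $1\lesssim \varphi j e^{\varphi j}$, so that $I_1$ lands in the $\varphi$-weighted term $\varphi\Vert e^{\varphi\Lambda^{1/2}}\bu\Vert_{H^r}\Vert e^{\varphi\Lambda^{1/2}}\bv\Vert_{H^{r+1/2}}\Vert e^{\varphi\Lambda^{1/2}}\bw\Vert_{H^{r+1/2}}$; it is exactly here (the factor $j^{-(r-3/2)}$ and Lemma \ref{lem:sum}) that the threshold $r>d/2+3/2$ enters, not in a Kato--Ponce step at $r>d/2+1$.

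Your Gevrey half has the mirror-image defect: bounding $e^{2\varphi|\bk|}-1\leq 2\varphi|\bk|e^{2\varphi|\bk|}$ extracts the extra power of frequency at the \emph{output} (high) frequency, so in the worst interaction ($\bu$ low, $\bv,\bw$ high) you carry total weight $|\bk|^{2r+1}\,|\bl|\sim N^{2r+2}$ while the right-hand side only offers $H^{r+1/2}$ on each of $\bv$ and $\bw$, i.e.\ capacity $N^{2r+1}$; this one-derivative deficit cannot be recovered from the low-frequency $\bu$ factor, and again the incompressibility rewrite changes nothing in that regime. The paper avoids this because the smallness in its commutator $I_2$ comes from the difference $\ell^re^{\varphi\ell}-k^re^{\varphi k}$, which by Lemma \ref{lem:algebraIneq} is proportional to the \emph{low} frequency $j=|\bl-\bk|$ (both the weight-free piece $I_2^A$ and the $\varphi$-piece $I_2^B$ then close by Cauchy--Schwarz and Lemma \ref{lem:sum}). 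So the proof needs the commutator splitting $I_1+I_2$ with the algebraic inequality on the difference of symbols, rather than a direct peeling of the exponential at frequency $\bk$; and it must avoid incompressibility altogether, since the lemma is stated for general $\bu\in D(e^{\varphi\Lambda^{1/2}}:H^{r+1/2})$.
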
 

\begin{remark}
Lemma \ref{lem:ConvEst} generalises \cite[Lemma 8]{levermore1997analyticity} in two ways and we defer its proof to the appendix below. Firstly, nowhere is the incompressibility condition used (hence  $\bu\in D( e^{\varphi \Lambda^{1/2}} :  {H}^{r+1/2}(\mathbb{T}^d))$ rather than  $\bu\in D( e^{\varphi \Lambda^{1/2}} :  \mathbb{H}^{r+1/2}(\mathbb{T}^d))$) whereas $\bu$ was taken to be (weighted) incompressible in the cited result. 
Secondly, we clearly do not require $\bv=\bw$ nor do they need to be scaler-valued.
\end{remark} 
Finally, we recall the following result in \cite[Lemma 2.2]{crisan2023theoretical1}. Although the original result is stated for non-negative integers $k\in\mathbb{N}\cup\{0\}$, a look at the proof shows that it extends to all non-negative real numbers $r\geq0$ with no further modification to the proof.
\begin{lemma}
\label{lem:VelToVor}
Let $r\geq0$ and assume that the triple $(\psi,\bu,\omega)$ satisfies
\begin{align*}
\bu=\nabla^\perp\psi, \qquad \omega=(\Delta-1)\psi.
\end{align*}
If $\omega\in H^r(\mathbb{T}^2)$, then the following estimate
\begin{align*}
\Vert \bu \Vert_{H^{r+1}}^2\lesssim \Vert\omega\Vert_{H^r}^2
\end{align*}
holds.
\end{lemma}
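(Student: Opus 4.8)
The plan is to pass to Fourier coefficients, where the relations linking $\psi$, $\bu$ and $\omega$ become multiplications by explicit symbols, and then to reduce the bound to a pointwise estimate on an elliptic Fourier multiplier.

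First I would expand $\psi=\sum_{\bk\in\mathbb{Z}^2_{\bm{0}}}\hat\psi_{\bk}\,e^{i\bk\cdot\bx}$ and $\omega=\sum_{\bk\in\mathbb{Z}^2_{\bm{0}}}\hat\omega_{\bk}\,e^{i\bk\cdot\bx}$, so that $\omega=(\Delta-1)\psi$ reads $\hat\omega_{\bk}=-(|\bk|^2+1)\hat\psi_{\bk}$. Since $|\bk|^2+1>0$ for $\bk\neq\bm{0}$, this recovers the streamfunction uniquely via $\hat\psi_{\bk}=-(|\bk|^2+1)^{-1}\hat\omega_{\bk}$, and it is consistent with the mean-free normalisation built into $H^r(\mathbb{T}^2)$ because $\omega$ being mean-free forces $\hat\psi_{\bm{0}}=\bm{0}$, hence $\psi$ mean-free. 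Then $\bu=\nabla^\perp\psi$ gives $\hat{\bu}_{\bk}=i\bk^\perp\hat\psi_{\bk}$ with $\bk^\perp=(-k_2,k_1)$, so that $|\hat{\bu}_{\bk}|=|\bk|\,|\hat\psi_{\bk}|=|\bk|(|\bk|^2+1)^{-1}|\hat\omega_{\bk}|$; in particular $\bu$ is mean-free and so genuinely lies in $H^{r+1}(\mathbb{T}^2)$.

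The estimate is then a single computation:
\begin{align*}
\Vert\bu\Vert_{H^{r+1}}^2=\sum_{\bk\in\mathbb{Z}^2_{\bm{0}}}|\bk|^{2(r+1)}\,|\hat{\bu}_{\bk}|^2=\sum_{\bk\in\mathbb{Z}^2_{\bm{0}}}|\bk|^{2r}\,\frac{|\bk|^4}{(|\bk|^2+1)^2}\,|\hat\omega_{\bk}|^2,
\end{align*}
and the symbol $m(\bk):=|\bk|^4/(|\bk|^2+1)^2$ satisfies $0\le m(\bk)\le 1$ for all $\bk$. Dropping it yields $\Vert\bu\Vert_{H^{r+1}}^2\le\sum_{\bk\in\mathbb{Z}^2_{\bm{0}}}|\bk|^{2r}|\hat\omega_{\bk}|^2=\Vert\omega\Vert_{H^r}^2$, which is the claim — in fact with implied constant $1$.

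I do not expect a genuine obstacle here; the only care needed is bookkeeping. The zero frequency must stay excluded so that $|\bk|>0$ (no division issue) and the homogeneous norms are the correct ones, and one should record, as above, that $\psi$ and $\bu$ inherit mean-freeness from $\omega$. Nothing in the argument distinguishes integer $r$ from general $r\ge0$, which is precisely why the statement of \cite[Lemma 2.2]{crisan2023theoretical1} extends verbatim to all non-negative reals.
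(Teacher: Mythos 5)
Your proof is correct, and it is essentially the argument the paper relies on: the paper gives no proof of its own but simply cites \cite[Lemma 2.2]{crisan2023theoretical1}, whose content is exactly this elliptic relation $\hat\omega_{\bk}=-(|\bk|^2+1)\hat\psi_{\bk}$, $\hat{\bu}_{\bk}=i\bk^\perp\hat\psi_{\bk}$ on the torus. Your Fourier-multiplier computation, with the bound $|\bk|^4/(|\bk|^2+1)^2\le 1$, also makes transparent the paper's remark that the statement extends verbatim from integer $k$ to all real $r\ge 0$.
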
 
\section{Main result}
\subsection{Complexified TQG}  
We wish to find a complex neighbourhood of the initial time where we can find a unique solution of the complexified TQG that is holomorphic in time with values in the Gevrey space of complex analytic functions.
For us to extend the TQG equation to a complex time with complex-valued solutions $(b,q)=(b_1+ib_2,q_1+iq_2)$ where $(b_1,q_1)$ and $(b_2,q_2)$ are real-valued solutions, we consider the complexification of the all the functions spaces under consideration \cite{luna2012complexifications}. For example, the complexification of the domain of $\Lambda$, $D(\Lambda)$, is $D(\Lambda)_{\mathbb{C}}:=D(\Lambda)+i D(\Lambda)$ and so on. Furthermore, for $\bff,\bg\in \mathbb{C}$ given by $\bff=\bff_1+i\bff_2$ and $\bg=\bg_1+i\bg_2$, the complexified $L^2$-inner product is given by
\begin{align*}
\langle\bff\,,\, \bg\rangle_\mathbb{C}
=
\langle\bff_1\,,\, \bg_1\rangle
+
\langle\bff_2\,,\, \bg_2\rangle
+i
[\langle\bff_2\,,\, \bg_1\rangle
-
\langle\bff_1\,,\, \bg_2\rangle]
\end{align*}
whereas the complexified bilinear form $B(\bff\,,\, \bg):=(\bff \cdot\nabla)\bg$
is given by the  bilinear form
\begin{align*}
B(\bff\,,\, \bg)_\mathbb{C}
= 
B(\bff_1\,,\, \bg_1)
-
B(\bff_2\,,\, \bg_2)
+i
[B(\bff_2\,,\, \bg_1)
+
B(\bff_1\,,\, \bg_2)].
\end{align*}
Correspondingly, the complexified trilinear form is given by
\begin{align*}
\langle B(\bff\,,\, \bg)_\mathbb{C}\,,\, \mathbf{h} \rangle_\mathbb{C}
=&
\langle B(\bff_1\,,\, \bg_1)-B(\bff_2\,,\, \bg_2)\,,\, \mathbf{h} _1\rangle
+
\langle B(\bff_2\,,\, \bg_1)+B(\bff_1\,,\, \bg_2) \,,\, \mathbf{h} _2\rangle
\\&+
i\big[
\langle B(\bff_2\,,\, \bg_1)+B(\bff_1\,,\, \bg_2) \,,\, \mathbf{h} _1\rangle
-
\langle B(\bff_1\,,\, \bg_1)-B(\bff_2\,,\, \bg_2) \,,\, \mathbf{h} _2\rangle
\big].
\end{align*}
Note that since we work on a periodic domain, if $\bff$ is divergence free (i.e. $\bff_1$ and $\bff_2$ are both divergence free) and $\bg=\mathbf{h}$ (i.e. $\bg_1=\mathbf{h}_1$, and $\bg_2=\mathbf{h}_2$), then the orthogonality property of the real-valued trilinear form means that 
\begin{align*}
\langle B(\bff\,,\, \bg)_\mathbb{C}\,,\, \bg \rangle_\mathbb{C}
=&-
\langle B(\bff_2\,,\, \bg_2)\,,\, \bg_1\rangle
+
\langle B(\bff_2\,,\, \bg_1) \,,\, \bg _2\rangle
\\&+
i\big[
\langle B(\bff_1\,,\, \bg_2) \,,\, \bg _1\rangle
-
\langle B(\bff_1\,,\, \bg_1)  \,,\, \bg_2\rangle
\big].
\end{align*}
Thus, the complexified trilinear form is not orthogonal in its last two variables. With this preparation in hand, we fix $\theta\in(\frac{\pi}{2},\frac{\pi}{2})$ and consider the time $\zeta=se^{i\theta}$ for $s\in I$ so that $\mathrm{Re} \,\zeta =s\cos(\theta)\in I$. Here, without loss of generality, we assume that $I=(0,T)$ is such that $T>0$ is the maximal time for the existence of smooth solutions.
The complexified TQG equation is given by
\begin{align}
\frac{\dd b}{\dd\zeta}  + B(\bu,b)_\mathbb{C} =0,
\label{ceCx}
\\
\frac{\dd q}{\dd\zeta}  + B(\bu,q)_\mathbb{C} -B(\bu,b)_\mathbb{C}  = -B(\bu_h,b)_\mathbb{C} ,  \label{meCx}
\\
b_0(x)=b(0,x)_\mathbb{C},\qquad q_0(x)=q(0,x)_\mathbb{C},
\label{initialTQGCx}
\end{align}
with
\begin{align}
\label{constrtCx}
\bu =\nabla^\perp \psi_\mathbb{C}, 
\qquad
\bu_h =\frac{1}{2} \nabla^\perp h_\mathbb{C},
\qquad
q=(\Delta -1) \psi_\mathbb{C} +f_\mathbb{C}
\end{align}
where $g_\mathbb{C}:=g_1+ig_2$ for $g_1,g_2\in \mathbb{R}$. Our main result is now given by the following.

\begin{theorem}
\label{thm:main}
Let $\varphi_0>0$ be a constant and let $(\bu_h,f,b_0,q_0)$ be a dataset satisfying 
\begin{align}
&(b_0,q_0) \in D( e^{\varphi_0 \Lambda^{1/2}} : {H}^4(\mathbb{T}^2))_\mathbb{C}\times  D( e^{\varphi_0 \Lambda^{1/2}}: {H}^3(\mathbb{T}^2))_\mathbb{C},\label{initialReg}
\\
&(\bu_h,f) \in  D( e^{\varphi_0 \Lambda^{1/2}}: \mathbb{H}^{7/2}(\mathbb{T}^2))_\mathbb{C} \times  D( e^{\varphi_0 \Lambda^{1/2}}: {H}^{7/2}(\mathbb{T}^2))_\mathbb{C}.
\label{forcingReg}
\end{align}
 Fix $\theta\in(-\frac{\pi}{2},\frac{\pi}{2})$ and for $s\in I$, let  $\varphi =\varphi(s\cos\theta)$ be such that $\varphi\in C^1(\overline{I})$. 
If the function $\varphi $ solves
\begin{align}
\label{varphiFunk}
\varphi(s)=\varphi_0e^{-c\int_0^s \Theta(\tau)\dd\tau}, \qquad \varphi(s=0)=\varphi_0
\end{align}
for $c>0$, where
\begin{align*}
\Theta(s):=& 
\Vert( e^{\varphi(s\cos\theta) \Lambda^{1/2}} b(\zeta,\cdot)\,,\,e^{\varphi(s\cos\theta) \Lambda^{1/2}}q(\zeta,\cdot))\Vert_{H^{4}_{\mathbb{C}} \times H^{3}_{\mathbb{C}}}
  \\&+
 \Vert (e^{\varphi_0  \Lambda^{1/2}  } \bu_h(\cdot) \,,\, e^{\varphi_0 \Lambda^{1/2}}f(\cdot))\Vert_{H_{\mathbb{C}}^{3} \times H^{3}_{\mathbb{C}}},
\end{align*}
$\zeta=se^{i\theta}$,
then a dataset $(\bu_h,f,b_0,q_0)$   satisfying \eqref{initialReg}-\eqref{forcingReg} generates  a unique solution of \eqref{ceCx}-\eqref{constrtCx} in the region 
\begin{align*}
\left\{\zeta=se^{i\theta}\quad :\quad \mathrm{Re} \,\zeta \in I, \quad
\vert\theta\vert<\frac{\pi}{2}, \quad 0<s\, \mathcal{D}(\mathrm{data})^\frac{1}{2}< \frac{1}{c\cos\theta} \right\}
\end{align*}
where
\begin{align*}
\mathcal{D}(\mathrm{data}):=\Vert    (e^{\varphi_0  \Lambda^{1/2}  } b_0 \,,\,e^{\varphi_0   \Lambda^{1/2}  } q_0  ) \Vert^2_{H_{\mathbb{C}}^4\times H_{\mathbb{C}}^3} 
+
\Vert    (e^{\varphi_0  \Lambda^{1/2}  } \bu_h \,,\,e^{\varphi_0   \Lambda^{1/2}  } f  ) \Vert_{H_{\mathbb{C}}^{7/2} \times H^{7/2}_{\mathbb{C}}}^2.
\end{align*}
\end{theorem}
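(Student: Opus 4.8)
The plan is to run a Galerkin-type a priori estimate in the Gevrey norm, close it by a differential inequality in $s$ along the ray $\zeta = se^{i\theta}$, and then upgrade to existence/uniqueness and holomorphy in time via the abstract Cauchy--Kovalevskaya-type scheme used by Levermore and Oliver. First I would differentiate the complexified system \eqref{ceCx}--\eqref{constrtCx}: apply $\Lambda^{2}e^{\varphi(s\cos\theta)\Lambda^{1/2}}$ to \eqref{ceCx} and $\Lambda^{3/2}e^{\varphi(s\cos\theta)\Lambda^{1/2}}$ to \eqref{meCx} (so as to land in $H^{4}_{\mathbb C}$ for $b$ and $H^{3}_{\mathbb C}$ for $q$), pair with the conjugate of the same, and take real parts. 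The key point is that $\frac{\dd}{\dd\zeta} = e^{-i\theta}\frac{\dd}{\dd s}$, so $\mathrm{Re}\langle \frac{\dd b}{\dd\zeta}, \cdot\rangle_{\mathbb C}$ produces $\tfrac12\cos\theta\,\frac{\dd}{\dd s}\Vert\cdot\Vert^2$ plus a $\sin\theta$ cross term that is still controlled by the convection estimates; the factor $\cos\theta$ is exactly what produces the $\tfrac{1}{c\cos\theta}$ in the existence region. The time derivative of the weight contributes the crucial good term $\dot\varphi\,\Vert\Lambda^{1/4}e^{\varphi\Lambda^{1/2}}(\cdot)\Vert^2 = \dot\varphi\,\Vert e^{\varphi\Lambda^{1/2}}(\cdot)\Vert_{H^{r+1/2}}^2$, which, since $\dot\varphi<0$ along \eqref{varphiFunk}, is a negative (dissipation-like) term of precisely the form $-c\,\varphi\,\Theta(s)\,\Vert e^{\varphi\Lambda^{1/2}}(\cdot)\Vert_{H^{r+1/2}}^2$.

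Next I would estimate all the nonlinear terms with Lemma \ref{lem:ConvEst}, handling the real and imaginary components of $B(\cdot,\cdot)_{\mathbb C}$ term by term (the lemma does not need incompressibility or scalar-valuedness, so it applies to each of $B(\bff_i,\bg_j)$). The velocity $\bu = \nabla^\perp\psi_{\mathbb C}$ with $q=(\Delta-1)\psi_{\mathbb C}+f_{\mathbb C}$ is recovered from $q$ by Lemma \ref{lem:VelToVor}, giving $\Vert e^{\varphi\Lambda^{1/2}}\bu\Vert_{H^{r+1}}\lesssim \Vert e^{\varphi\Lambda^{1/2}}(q-f)\Vert_{H^{r}}$ (the weight commutes with Fourier multipliers, so the lemma complexifies and Gevrey-ifies trivially). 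Collecting: the ``top order'' pieces from Lemma \ref{lem:ConvEst} that are linear in $\varphi$ have the shape $\varphi\cdot(\text{Gevrey }H^{r}\text{ norms})\cdot\Vert e^{\varphi\Lambda^{1/2}}(\cdot)\Vert_{H^{r+1/2}}$, and by Young's inequality $ab\le \tfrac12\varepsilon a^2 + \tfrac{1}{2\varepsilon}b^2$ these are absorbed into the good term $c\varphi\,\Theta(s)\,\Vert e^{\varphi\Lambda^{1/2}}(\cdot)\Vert_{H^{r+1/2}}^2$ provided $c$ is chosen large enough (this fixes the constant $c$ in \eqref{varphiFunk}); the leftover ``lower order'' pieces are bounded by $\Theta(s)$ times the energy $E(s):=\Vert(e^{\varphi\Lambda^{1/2}}b, e^{\varphi\Lambda^{1/2}}q)\Vert_{H^4_{\mathbb C}\times H^3_{\mathbb C}}^2 + \Vert(e^{\varphi_0\Lambda^{1/2}}\bu_h, e^{\varphi_0\Lambda^{1/2}}f)\Vert_{H^{3}_{\mathbb C}\times H^3_{\mathbb C}}^2$ itself (note $\bu_h,f$ are time-independent, so the static data norms are constants controlled by \eqref{forcingReg}, which has $7/2>3$ room to spare). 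The resulting differential inequality is $\cos\theta\,\frac{\dd}{\dd s}E(s)\le C\,\Theta(s)\,E(s)$ with $\Theta\lesssim E^{1/2}$, hence $\cos\theta\,\frac{\dd}{\dd s}E^{1/2}\le C\,E$, i.e. $\cos\theta\,\frac{\dd}{\dd s}(E^{-1/2})\ge -C$, which integrates to keep $E(s)^{1/2}$ finite as long as $s\,E(0)^{1/2} < \tfrac{1}{C\cos\theta}$; identifying $E(0)=\mathcal{D}(\mathrm{data})$ and $C=c$ gives exactly the stated region.

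Finally, the a priori bound is converted into an actual theorem by the standard procedure: build Galerkin approximations $b^N,q^N$ on the span of the first $N$ Fourier modes (on which everything is finite-dimensional ODE in the complex time $\zeta$, hence holomorphic by Picard--Lindelöf/Cauchy theory in $\zeta$), prove the uniform-in-$N$ bound above, pass to the limit by Aubin--Lions-type compactness in the Gevrey scale (using the $H^{r+1/2}$ control to get equicontinuity and strong convergence of the nonlinear terms), and check the limit is holomorphic in $\zeta$ on the open sector because it is a locally uniform limit of holomorphic functions (Weierstrass/Morera). Uniqueness follows by running the same energy estimate on the difference of two solutions in a lower-order norm (e.g. $H^{2}_{\mathbb C}\times H^{1}_{\mathbb C}$ without weight), where Lemma \ref{lem:ConvEst} again closes a linear Grönwall estimate. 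I expect the main obstacle to be bookkeeping in the nonlinear estimate: the trilinear form $\langle B(\bff,\bg)_{\mathbb C}, \mathbf h\rangle_{\mathbb C}$ is \emph{not} antisymmetric in its last two slots (as the excerpt stresses), so one cannot rely on cancellation of the top-order term and must genuinely feed every summand through Lemma \ref{lem:ConvEst} and absorb it into $\dot\varphi$; keeping the powers of $\Lambda$ matched (4 for $b$, 3 for $q$, and $7/2$ / $3$ for the data so that Lemma \ref{lem:VelToVor}'s gain of one derivative in $\bu$ lands correctly) is where care is needed, while the holomorphy and compactness steps are routine given \cite{levermore1997analyticity}.
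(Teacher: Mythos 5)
Your proposal follows essentially the same route as the paper: an energy estimate along the ray $\zeta=se^{i\theta}$ in the weighted norms $\Vert e^{\varphi\Lambda^{1/2}}b\Vert_{H^4_{\mathbb C}}$, $\Vert e^{\varphi\Lambda^{1/2}}q\Vert_{H^3_{\mathbb C}}$, with the $\dot\varphi$ term supplying the half-derivative good term, the convection terms handled by Lemma \ref{lem:ConvEst} together with Lemma \ref{lem:VelToVor} for $\bu$ in terms of $q-f_{\mathbb C}$, the top-order ($\varphi$-weighted $H^{9/2}\times H^{7/2}$) contributions cancelled by the choice \eqref{varphiFunk}, and existence/holomorphy/uniqueness obtained by Galerkin, locally uniform limits of holomorphic approximants, and a lower-norm estimate on differences. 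The one substantive divergence is how the leftover cubic remainder is closed: the paper bounds the unweighted norms $\Vert(b(\zeta),q(\zeta))\Vert_{H^4_{\mathbb C}\times H^3_{\mathbb C}}$ uniformly for $\mathrm{Re}\,\zeta\in I$ via \cite{crisan2023theoretical1} and converts the data norms with the elementary inequality $\vert\bk\vert\le\tfrac{1}{2\varphi_0}e^{2\varphi_0\vert\bk\vert}$, so the remainder is the constant $c\,\mathcal{D}(\mathrm{data})^{3/2}$ and the Gevrey energy grows only linearly in $s$, which is then integrated directly; you instead bound the remainder by $\Theta(s)E(s)\lesssim E(s)^{3/2}$ and integrate a Riccati-type inequality. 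Both closures produce a region of the stated shape (yours is more self-contained, the paper's yields the cleaner bound $E(s)\le\mathcal{D}+c\,s\cos\theta\,\mathcal{D}^{3/2}$), so this is an acceptable variant. Two points need repair, though. First, your displayed inequality puts $\cos\theta$ on the left, $\cos\theta\,\frac{\dd}{\ds}E\le C\,\Theta\,E$; integrating that gives the condition $s\,\mathcal{D}^{1/2}\lesssim\cos\theta$, not the claimed $s\,\mathcal{D}^{1/2}<\tfrac{1}{c\cos\theta}$ — in the paper's accounting (see \eqref{est0}) the factor $\cos\theta$ multiplies the \emph{right-hand side}, i.e. the convection terms, and it is exactly this placement that yields $\tfrac{1}{c\cos\theta}$ and a region that widens as $\vert\theta\vert\to\tfrac{\pi}{2}$. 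Second, and relatedly, your plan to keep the $\sin\theta$ cross term and control it ``by the convection estimates'' would leave the nonlinear terms with an $O(1)$ rather than $O(\cos\theta)$ prefactor, which only gives a $\theta$-independent (smaller) region; to obtain the region as stated you must attach the $\cos\theta$ weight to the convection terms exactly as the paper does when taking $\mathrm{Re}\,e^{i\theta}\langle\cdot,\cdot\rangle_{\mathbb C}$, and this step should be made explicit rather than delegated to the remainder estimate.
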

\begin{remark}
Theorem \ref{thm:main} remains true if the family of  spaces $\{H^3, H^{7/2}, \mathbb{H}^{7/2}, H^4\}$ (and their complexified variants) are replaced with  $\{H^r, H^{r+\frac{1}{2}}, \mathbb{H}^{r+\frac{1}{2}}, H^{r+1}\}$ (and their complexified variants), respectively, provided that $r>\frac{5}{2}$. This is consequence of Lemma \ref{lem:ConvEst}. The choice of the former family is to make the analysis clear.
\end{remark}
\begin{remark}
Under the Beale--Kato--Majda criterion for the blow-up of solution to the TQG \cite{crisan2021blow},  the time span $T>0$ can be made arbitrarily large. In this case, the complex neighbourhood where the solution is of Gevrey analytic class becomes
\begin{align*}
\left\{\zeta=se^{i\theta}\quad : \quad
\mathrm{Re} \,\zeta>0, \quad
\vert\theta\vert<\frac{\pi}{2}, \quad 0<s\, \mathcal{D}(\mathrm{data})^{1/2}< \frac{1}{c\cos\theta} \right\}
\end{align*}
so that by tuning $\theta$, one can make $s$ arbitrarily large.
\end{remark}
\subsection{Proof of result}
We now present the proof of Theorem \ref{thm:main} which uses arguments in  \cite{crisan2022spatial, foias1989gevrey, levermore1997analyticity} and \cite[Chapter II]{foias2001navier}. This will involve formal a priori estimates which, as usual, can be made rigorous by first considering a finite-dimensional Galerkin approximation and later passing to the limit in the approximation parameter.
With this said, for $\varphi:=\varphi(\mathrm{Re} \,\zeta)=\varphi(s\cos\theta)$ with fixed $\theta\in(-\frac{\pi}{2},\frac{\pi}{2})$, we first consider the following equation that is due to the product rule for the product of two functions:

\begin{align*}
\frac{1}{2}\frac{\dd}{\ds} \Vert    e^{\varphi  \Lambda^{1/2}  } b \Vert^2_{H_{\mathbb{C}}^4} 
=& 
\mathrm{Re}\, 
\Big\langle  \Lambda^{2 }
\frac{\dd}{\ds}\big(
 e^{\varphi(s\cos\theta)  \Lambda^{1/2}  } b(\zeta)\big)
,\,     \Lambda^{2 } e^{\varphi(s\cos\theta)    \Lambda^{1/2}  } b(\zeta)
\Big\rangle _{\mathbb{C}}
\\
=&
\cos\theta\,\Dot{\varphi} 
 \Vert    e^{\varphi  \Lambda^{1/2}  } b(\zeta) \Vert^2_{H_{\mathbb{C}}^{9/2}} 
+
 \mathrm{Re}\,
 \Big\langle 
 \Lambda^{2} e^{\varphi  \Lambda^{1/2}  }e^{i\theta} \tfrac{\dd b(\zeta)}{\dd\zeta}
,\,     \Lambda^{2} e^{\varphi  \Lambda^{1/2}  }b(\zeta)
\Big\rangle_{\mathbb{C}} 
\\
=&
\cos\theta\,
\Dot{\varphi} 
 \Vert    e^{\varphi  \Lambda^{1/2}  } b \Vert^2_{H_{\mathbb{C}}^{9/2}} 
-
 \mathrm{Re}\,
 e^{i\theta}
 \Big\langle
  \Lambda^{2} e^{\varphi  \Lambda^{1/2}  }
B(\bu,b)_{\mathbb{C}}
,\,     \Lambda^{2} e^{\varphi  \Lambda^{1/2}  }b \Big\rangle_{\mathbb{C}}.
\end{align*}
Similarly, we obtain
\begin{align*}
\frac{1}{2}\frac{\dd}{\ds} \Vert    e^{\varphi  \Lambda^{1/2}  } q \Vert^2_{H_{\mathbb{C}}^3} 
=&
\cos\theta\,\Dot{\varphi} 
 \Vert    e^{\varphi  \Lambda^{1/2}  } q \Vert^2_{H_{\mathbb{C}}^{7/2}} 
-
 \mathrm{Re}\,
 e^{i\theta}
 \Big\langle
 \Lambda^{3/2 } e^{\varphi  \Lambda^{1/2}  }B(\bu,q)_{\mathbb{C}}
,\,     \Lambda^{3/2 } e^{\varphi  \Lambda^{1/2}  }q 
\Big\rangle_{\mathbb{C}}
\\&+
 \mathrm{Re}\,
 e^{i\theta}
 \Big\langle
 \Lambda^{3/2 } e^{\varphi  \Lambda^{1/2}  } B((\bu-\bu_h),b)_{\mathbb{C}}
,\,     \Lambda^{3/2 } e^{\varphi  \Lambda^{1/2}  }q 
\Big\rangle_{\mathbb{C}}.
\end{align*}
Thus, by combining the two equations above, we obtain
\begin{align*}
\frac{1}{2}\frac{\dd}{\ds} \Vert    (e^{\varphi  \Lambda^{1/2}  } b \,,\,e^{\varphi  \Lambda^{1/2}  } q ) \Vert^2_{H_{\mathbb{C}}^4\times H_{\mathbb{C}}^3} 
\leq&
\cos\theta\,\Dot{\varphi} 
\Vert    (e^{\varphi  \Lambda^{1/2}  } b(\zeta) \,,\,e^{\varphi  \Lambda^{1/2}  } q(\zeta)) \Vert^2_{H_{\mathbb{C}}^{9/2}\times H_{\mathbb{C}}^{7/2}} 
\\&
+
\cos \theta\big(\big\vert 
 \big\langle
 \Lambda^{2} e^{ \varphi  \Lambda^{1/2}  }B(\bu,b)_{\mathbb{C}}
,\,     \Lambda^{2} e^{ \varphi  \Lambda^{1/2}  }b \big\rangle_{\mathbb{C}}
\big\vert
\\&
\qquad+\big\vert 
 \big\langle
 \Lambda^\frac{3}{2 } e^{\varphi  \Lambda^{1/2}  }B(\bu,q)_{\mathbb{C}}
,\,     \Lambda^\frac{3}{2 } e^{\varphi  \Lambda^{1/2}  }q 
\big\rangle_{\mathbb{C}}
\big\vert
\\&\qquad+
\big\vert 
 \big\langle
 \Lambda^\frac{3}{2 } e^{\varphi  \Lambda^{1/2}  } B((\bu-\bu_h),b)_{\mathbb{C}}
,\,     \Lambda^\frac{3}{2 } e^{\varphi  \Lambda^{1/2}  }q 
\big\rangle_{\mathbb{C}}\big\vert\big).
\end{align*}
Now, by using Lemma \ref{lem:ConvEst} and the fact that for any complex $f=f_1+if_2$ in a complexified normed space $(X_{\mathbb{C}}, \Vert\cdot\Vert_{X_{\mathbb{C}}})$ the estimate $\Vert f_i\Vert_X\leq \Vert f\Vert_{X_\mathbb{C}}$ hold for any $i=\{1,2\}$, we obtain
\begin{align*}
 \big\vert 
 \big\langle
 \Lambda^{2} e^{ \varphi  \Lambda^{1/2}  }B(\bu,b)_{\mathbb{C}}
,\,     \Lambda^{2} e^{ \varphi  \Lambda^{1/2}  }b \big\rangle_{\mathbb{C}}
\big\vert
\lesssim&
\Vert \bu\Vert_{H^4_{\mathbb{C}}}
 \Vert b\Vert_{H^4_{\mathbb{C}}}^2 
+
\varphi
\big(
 \Vert e^{\varphi \Lambda^{1/2}} \bu\Vert_{H^{4}_{\mathbb{C}}}
 \Vert e^{\varphi \Lambda^{1/2}}b\Vert_{H^{9/2}_{\mathbb{C}}}^2 
\\&
\quad
+
 \Vert e^{\varphi \Lambda^{1/2}} \bu\Vert_{H^{9/2}_{\mathbb{C}}}
 \Vert e^{\varphi \Lambda^{1/2}}b\Vert_{H^{4}_{\mathbb{C}}}
  \Vert e^{\varphi \Lambda^{1/2}}b\Vert_{H^{9/2}_{\mathbb{C}}}\big)
\\
\lesssim&
\Vert b\Vert_{H^4_{\mathbb{C}}}^3
+
 \Vert q\Vert_{H^3_{\mathbb{C}}}^3 
+
 \Vert f\Vert_{H^3_{\mathbb{C}}}^3 
\\&+
\varphi
\big(
\Vert ( e^{\varphi \Lambda^{1/2}} b\,,\,  e^{\varphi \Lambda^{1/2}}q)\Vert_{H^{4}_{\mathbb{C}} \times H^{3}_{\mathbb{C}}}
  +
\Vert  e^{\varphi_0 \Lambda^{1/2}}f\Vert_{H^{3}_{\mathbb{C}}}
\big)
\\&\times
\big(
 \Vert ( e^{\varphi \Lambda^{1/2}} b\,,\, e^{\varphi \Lambda^{1/2}}q)\Vert_{  H^{9/2}_{\mathbb{C}} \times H^{7/2}_{\mathbb{C}}}^2
 +
 \Vert  e^{\varphi_0 \Lambda^{1/2}}f \Vert_{    H^{7/2}_{\mathbb{C}}}^2
 \big)
\end{align*}
where in the second inequality, we apply Lemma \ref{lem:VelToVor} to \eqref{constrtCx} and use Young's equality. We  also use the assumption $\varphi(s)\leq\varphi_0$ (which follows \eqref{varphiFunk} since the negative exponential is decreasing) in the forcing term. Similarly,
\begin{align*}
\big\vert 
 \big\langle
 \Lambda^\frac{3}{2 } e^{\varphi  \Lambda^{1/2}  }B(\bu,q)_{\mathbb{C}}
,\,      \Lambda^\frac{3}{2 } e^{\varphi  \Lambda^{1/2}  } q
\big\rangle_{\mathbb{C}}
\big\vert
\lesssim&
\Vert \bu\Vert_{H^3_{\mathbb{C}}}
 \Vert q\Vert_{H^3_{\mathbb{C}}}^2 
+
\varphi
\big(
 \Vert e^{\varphi \Lambda^{1/2}} \bu\Vert_{H^{3}_{\mathbb{C}}}
 \Vert e^{\varphi \Lambda^{1/2}}q\Vert_{H^{7/2}_{\mathbb{C}}}^2 
\\&
\quad
+
 \Vert e^{\varphi \Lambda^{1/2}} \bu\Vert_{H^{7/2}_{\mathbb{C}}}
 \Vert e^{\varphi \Lambda^{1/2}}q\Vert_{H^{3}_{\mathbb{C}}}
  \Vert e^{\varphi \Lambda^{1/2}}q\Vert_{H^{7/2}_{\mathbb{C}}}\big)
\\
\lesssim& 
 \Vert q\Vert_{H^3_{\mathbb{C}}}^3 
+
 \Vert f\Vert_{H^2_{\mathbb{C}}}^3 
+
\varphi
\big( 
  \Vert  e^{\varphi \Lambda^{1/2}}q\Vert_{H^{3}_{\mathbb{C}}}
  +
\Vert  e^{\varphi_0 \Lambda^{1/2}}f\Vert_{H^{3}_{\mathbb{C}}}
\big)
\\&\times
\big(
 \Vert ( e^{\varphi \Lambda^{1/2}} b\,,\, e^{\varphi \Lambda^{1/2}}q)\Vert_{  H^{9/2}_{\mathbb{C}} \times H^{7/2}_{\mathbb{C}}}^2
 +
 \Vert  e^{\varphi_0 \Lambda^{1/2}}f \Vert_{    H^{7/2}_{\mathbb{C}}}^2
 \big).
\end{align*}
Finally, we also obtain
\begin{align*}
\big\vert 
 \big\langle
 \Lambda^\frac{3}{2 } e^{\varphi  \Lambda^{1/2}  }&B((\bu-\bu_h),b)_{\mathbb{C}}
,\,   \Lambda^\frac{3}{2 } e^{\varphi  \Lambda^{1/2}  }q
\big\rangle_{\mathbb{C}}
\big\vert
\\\lesssim& 
\big(\Vert \bu\Vert_{H^3_{\mathbb{C}}}
+
\Vert \bu_h\Vert_{H^3_{\mathbb{C}}}
\big)
 \Vert b\Vert_{H^3_{\mathbb{C}}}  
\Vert q\Vert_{H^3_{\mathbb{C}}} 
\\&
+
\varphi
\big( \Vert e^{\varphi \Lambda^{1/2}} \bu\Vert_{H^{7/2}_{\mathbb{C}}}
+
\Vert e^{\varphi_0 \Lambda^{1/2}} \bu_h\Vert_{H^{7/2}_{\mathbb{C}}}
\big)
 \Vert e^{\varphi \Lambda^{1/2}} b\Vert_{H^{3}_{\mathbb{C}}}  
\Vert  e^{\varphi \Lambda^{1/2}} q\Vert_{H^{7/2}_{\mathbb{C}}}
\\&
+
\varphi
\big( \Vert e^{\varphi \Lambda^{1/2}} \bu\Vert_{H^{3}_{\mathbb{C}}}
+
\Vert e^{\varphi_0 \Lambda^{1/2}} \bu_h\Vert_{H^{3}_{\mathbb{C}}}
\big)
 \Vert e^{\varphi \Lambda^{1/2}} b\Vert_{H^{7/2}_{\mathbb{C}}}  
\Vert  e^{\varphi \Lambda^{1/2}} q\Vert_{H^{7/2}_{\mathbb{C}}}
\\
\lesssim&
\Vert b\Vert_{H^3_{\mathbb{C}}}^3
+
 \Vert q\Vert_{H^3_{\mathbb{C}}}^3 
+
 \Vert \bu_h\Vert_{H^3_{\mathbb{C}}}^3 
+
\Vert f\Vert_{H^2_{\mathbb{C}}}^3 
\\&+
\varphi
\big(
 \Vert  e^{\varphi \Lambda^{1/2}} b \Vert_{H^{3}_{\mathbb{C}} } 
  +
 \Vert  e^{\varphi_0  \Lambda^{1/2}  } \bu_h  \Vert_{H_{\mathbb{C}}^{3}  }
\big)
\\&\times 
\big(
 \Vert( e^{\varphi \Lambda^{1/2}} b\,,\,e^{\varphi \Lambda^{1/2}}q)\Vert_{H^{7/2}_{\mathbb{C}} \times H^{7/2}_{\mathbb{C}}}^2 
 +
 \Vert (e^{\varphi_0  \Lambda^{1/2}  } \bu_h \,,\, e^{\varphi_0 \Lambda^{1/2}}f)\Vert_{H_{\mathbb{C}}^{7/2} \times H^{5/2}_{\mathbb{C}}}^2
 \big) .
\end{align*}
Collecting everything yields
\begin{equation}
\begin{aligned}
\label{est0}
\frac{1}{2}\frac{\dd}{\ds} \Vert    (e^{\varphi  \Lambda^{1/2}  } b &\,,\,e^{\varphi  \Lambda^{1/2}  } q ) \Vert^2_{H_{\mathbb{C}}^4\times H_{\mathbb{C}}^3} 
\\
\leq&
\cos\theta\,\Dot{\varphi} 
\Vert    (e^{\varphi  \Lambda^{1/2}  } b  \,,\,e^{\varphi  \Lambda^{1/2}  } q ) \Vert^2_{H_{\mathbb{C}}^{9/2}\times H_{\mathbb{C}}^{7/2}} 
\\&+
c\cos\theta\big(
\Vert( b\,,\, q)\Vert_{H^4_{\mathbb{C}} \times H^3_{\mathbb{C}}}^3 
+
 \Vert (\bu_h\,,\, f)\Vert_{H^3_{\mathbb{C}}\times H^3_{\mathbb{C}}}^3 
\big)
\\&+c\cos\theta\,
\varphi
\big(
 \Vert( e^{\varphi \Lambda^{1/2}} b\,,\,e^{\varphi \Lambda^{1/2}}q)\Vert_{H^{4}_{\mathbb{C}} \times H^{3}_{\mathbb{C}}} 
  +
 \Vert (e^{\varphi_0  \Lambda^{1/2}  } \bu_h \,,\, e^{\varphi_0 \Lambda^{1/2}}f)\Vert_{H_{\mathbb{C}}^{3} \times H^{3}_{\mathbb{C}}}
\big)
\\&\times 
\big(
 \Vert( e^{\varphi \Lambda^{1/2}} b\,,\,e^{\varphi \Lambda^{1/2}}q)\Vert_{H^{9/2}_{\mathbb{C}} \times H^{7/2}_{\mathbb{C}}}^2 
 +
 \Vert (e^{\varphi_0  \Lambda^{1/2}  } \bu_h \,,\, e^{\varphi_0 \Lambda^{1/2}}f)\Vert_{H_{\mathbb{C}}^{7/2} \times H^{7/2}_{\mathbb{C}}}^2
 \big) 
\end{aligned}
\end{equation}
where, due to \eqref{forcingReg} and \cite{crisan2023theoretical1} we have that for all  $\mathrm{Re}(\zeta)\in I$,
\begin{align*}
\Vert( b(\zeta)&\,,\, q(\zeta))\Vert_{H^4_{\mathbb{C}} \times H^3_{\mathbb{C}}}^3 
+
 \Vert (\bu_h\,,\, f)\Vert_{H^3_{\mathbb{C}}\times H^3_{\mathbb{C}}}^3 
\\&
\lesssim \Vert( b_0\,,\, q_0)\Vert_{H^4_{\mathbb{C}} \times H^3_{\mathbb{C}}}^3 
+
 \Vert (\bu_h\,,\, f)\Vert_{H^4_{\mathbb{C}}\times H^3_{\mathbb{C}}}^3.
\end{align*} 
However, since we can use $\vert\bk\vert=\tfrac{1}{2\varphi_0}2\varphi_0\vert\bk\vert\leq \tfrac{1}{2\varphi_0}e^{2\varphi_0\vert\bk\vert}$ to obtain
\begin{align*}
\Vert \bu_h\Vert_{H^4}^3
\leq
\frac{1}{(2\varphi_0)^{3/2}}
\Big(\sum_{\bk\in \mathbb{Z}^d_{\bm{0}}}  \vert \bk\vert^{7}e^{2\varphi_0\vert\bk\vert} \vert \hat{\bu}_{h,\bk} \vert^2\Big)^{3/2}
=
\frac{1}{(2\varphi_0)^{3/2}}
\Vert e^{\varphi_0\Lambda^{1/2}} \bu_h\Vert_{H^{7/2}}^3,
\end{align*}
it follows that for
\begin{align*}
\mathcal{D}(\mathrm{data}):=\Vert    (e^{\varphi_0  \Lambda^{1/2}  } b_0 \,,\,e^{\varphi_0   \Lambda^{1/2}  } q_0  ) \Vert_{H_{\mathbb{C}}^4\times H_{\mathbb{C}}^3}^2
+
\Vert    (e^{\varphi_0  \Lambda^{1/2}  } \bu_h \,,\,e^{\varphi_0   \Lambda^{1/2}  } f  ) \Vert_{H_{\mathbb{C}}^{7/2} \times H^{7/2}_{\mathbb{C}}}^2,
\end{align*} 
we have that
\begin{align*}
\Vert( b(\zeta)&\,,\, q(\zeta))\Vert_{H^4_{\mathbb{C}} \times H^3_{\mathbb{C}}}^3 
+
 \Vert (\bu_h\,,\, f)\Vert_{H^3_{\mathbb{C}}\times H^3_{\mathbb{C}}}^3  
\lesssim  c\,\mathcal{D}(\mathrm{data})^\frac{3}{2}
\end{align*}
holds for all  $\mathrm{Re}(\zeta)\in I$. Thus, it follows from \eqref{est0} that
\begin{align*}
\frac{1}{2}\frac{\dd}{\ds} \Vert    (e^{\varphi  \Lambda^{1/2}  } b \,,\,e^{\varphi  \Lambda^{1/2}  } q ) \Vert^2_{H_{\mathbb{C}}^4\times H_{\mathbb{C}}^3} 
&\leq
c\mathcal{D}(\mathrm{data})^\frac{3}{2}\cos\theta 
+\cos\theta(\Dot{\varphi} +c\,
\varphi
\Theta(s))\Gamma(s) 
\end{align*}
where
\begin{align*}
\Gamma(\zeta):= \Vert( e^{\varphi \Lambda^{1/2}} b\,,\,e^{\varphi \Lambda^{1/2}}q)\Vert_{H^{9/2}_{\mathbb{C}} \times H^{7/2}_{\mathbb{C}}}^2 
 +
 \Vert (e^{\varphi_0  \Lambda^{1/2}  } \bu_h \,,\, e^{\varphi_0 \Lambda^{1/2}}f)\Vert_{H_{\mathbb{C}}^{7/2} \times H^{7/2}_{\mathbb{C}}}^2.
\end{align*} 
Since \eqref{varphiFunk} holds, it follows that
\begin{align*}
\Dot{\varphi}(s)=-c\,
\Theta(s)
\varphi_0e^{-c\int_0^s \Theta(\tau)\dd\tau}=-c\,\Theta(s)
\varphi(s).
\end{align*}
Thus, it follow from integration that 
\begin{align*} 
\Vert  &  (e^{\varphi(s\cos\theta)  \Lambda^{1/2}  } b(se^{i\theta}) \,,\,e^{\varphi(s\cos\theta)   \Lambda^{1/2}  } q(se^{i\theta})  ) \Vert^2_{H_{\mathbb{C}}^4\times H_{\mathbb{C}}^3} 
\\&\leq
\Vert    (e^{\varphi_0  \Lambda^{1/2}  } b_0 \,,\,e^{\varphi_0   \Lambda^{1/2}  } q_0  ) \Vert^2_{H_{\mathbb{C}}^4\times H_{\mathbb{C}}^3} 
+
c\,s \mathcal{D}(\mathrm{data})^\frac{3}{2}\cos\theta 
\end{align*}
holds for any $\mathrm{Re}(\zeta)\in I$. As such, 
if
\begin{align*}
s \mathcal{D}(\mathrm{data})^\frac{1}{2}\leq \frac{1}{c\cos\theta}
\end{align*} 
we obtain 
\begin{align*} 
\Vert  &  (e^{\varphi(s\cos\theta)  \Lambda^{1/2}  } b(se^{i\theta}) \,,\,e^{\varphi(s\cos\theta)   \Lambda^{1/2}  } q(se^{i\theta})  ) \Vert^2_{H_{\mathbb{C}}^4\times H_{\mathbb{C}}^3} 
 \leq
\mathcal{D}(\mathrm{data})
\end{align*}
so that the domain of analyticity is the set
\begin{align*}
\left\{ \zeta=s e^{i\theta}\,:\,\quad  s\cos\theta\in I, \quad
\vert\theta\vert<\frac{\pi}{2}, \quad  0<s \,\mathcal{D}(\mathrm{data})^\frac{1}{2}<  \frac{1}{c\cos\theta}\right\}.
\end{align*} 

\section{Appendix} 
We devote this section to the proof of Lemma \ref{lem:ConvEst} which follows the arguments of  \cite[Lemma 8]{levermore1997analyticity}.  We rely on two preliminary results, the first of which is given by \cite[Lemma 9]{levermore1997analyticity}.
\begin{lemma}
\label{lem:algebraIneq}
Let $r\geq1$ and $\varphi\geq0$. Then for all all real numbers $\xi,\eta\geq0$, the estimate
\begin{align*}
\big\vert \xi^r e^{\varphi\xi}-  \eta^r e^{\varphi\eta} \big\vert
\lesssim_r \vert \xi-\eta\vert \big[(\vert \xi-\eta\vert^{r-1}+\eta^{r-1})
+
\varphi(\vert \xi-\eta\vert^r+\eta^r) e^{\varphi(\vert \xi-\eta\vert+\eta)} \big]
\end{align*}
holds.
\end{lemma}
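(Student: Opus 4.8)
The plan is to reduce everything to a one-variable computation via the fundamental theorem of calculus applied to the function $g(t):=t^r e^{\varphi t}$ on $[0,\infty)$. Since $r\geq 1$, the map $t\mapsto t^r$ is $C^1$ on $[0,\infty)$ with continuous derivative $r t^{r-1}$, so $g\in C^1([0,\infty))$ and
\begin{align*}
\big\vert \xi^r e^{\varphi\xi}-\eta^r e^{\varphi\eta}\big\vert
=\Big\vert\int_\eta^\xi g'(t)\dd t\Big\vert,
\qquad
g'(t)=\big(r t^{r-1}+\varphi t^r\big)e^{\varphi t}.
\end{align*}

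The first step is to massage the integrand so that its two pieces already match the two terms on the right-hand side of the claim. The term $\varphi t^r e^{\varphi t}$ is of the desired shape, but the exponential factor in $r t^{r-1}e^{\varphi t}$ must be removed, since the bracket $(\vert\xi-\eta\vert^{r-1}+\eta^{r-1})$ carries no exponential weight. The key elementary inequality is $e^{\varphi t}-1=\int_0^{\varphi t}e^{\sigma}\dd\sigma\leq \varphi t\, e^{\varphi t}$ for $\varphi t\geq 0$, i.e.\ $e^{\varphi t}\leq 1+\varphi t\, e^{\varphi t}$; multiplying by $r t^{r-1}$ gives $r t^{r-1}e^{\varphi t}\leq r t^{r-1}+r\varphi t^r e^{\varphi t}$, hence the pointwise bound $g'(t)\lesssim_r t^{r-1}+\varphi t^r e^{\varphi t}$.

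Next I would integrate this bound over the interval with endpoints $\xi$ and $\eta$, which has length $\vert\xi-\eta\vert$ and on which $0\leq t\leq \max(\xi,\eta)$. Since $\max(\xi,\eta)\leq \eta+\vert\xi-\eta\vert$ regardless of which of $\xi,\eta$ is larger (so no case distinction is needed), and the integrand $t^{r-1}+\varphi t^r e^{\varphi t}$ is nondecreasing in $t$, bounding the integral by length times supremum yields
\begin{align*}
\big\vert \xi^r e^{\varphi\xi}-\eta^r e^{\varphi\eta}\big\vert
\lesssim_r \vert\xi-\eta\vert\Big[(\eta+\vert\xi-\eta\vert)^{r-1}+\varphi(\eta+\vert\xi-\eta\vert)^r e^{\varphi(\eta+\vert\xi-\eta\vert)}\Big].
\end{align*}
Finally, applying the elementary estimate $(x+y)^s\lesssim_s x^s+y^s$ for $x,y\geq 0$, $s\geq 0$ (subadditivity of $t\mapsto t^s$ when $0\leq s\leq 1$, and $(x+y)^s\leq 2^{s-1}(x^s+y^s)$ when $s\geq 1$) with $s=r-1$ and $s=r$ to the two powers of $\eta+\vert\xi-\eta\vert$ converts the right-hand side into exactly $\vert\xi-\eta\vert\big[(\vert\xi-\eta\vert^{r-1}+\eta^{r-1})+\varphi(\vert\xi-\eta\vert^r+\eta^r)e^{\varphi(\vert\xi-\eta\vert+\eta)}\big]$, which is the claim.

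The only point warranting care — the step I would flag — is the apparent asymmetry of the right-hand side in $\xi$ and $\eta$ (only $\eta$ appears, not $\xi$). This is precisely why one bounds $t\leq\max(\xi,\eta)\leq\eta+\vert\xi-\eta\vert$ rather than, say, $t\leq\xi$; if one prefers, one may instead split into the cases $\xi\geq\eta$ and $\xi<\eta$, the latter being strictly easier since then $t\leq\eta$ directly on the interval of integration. Everything else is routine.
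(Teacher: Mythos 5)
Your proof is correct, and every step checks out: the $C^1$ regularity of $t\mapsto t^re^{\varphi t}$ on $[0,\infty)$ uses $r\ge 1$; the inequality $e^{\varphi t}\le 1+\varphi t\,e^{\varphi t}$ legitimately strips the exponential weight off the $rt^{r-1}$ term; the length-times-supremum bound is valid because your majorant $t^{r-1}+\varphi t^re^{\varphi t}$ is nondecreasing and $\max(\xi,\eta)\le \eta+\vert\xi-\eta\vert$ holds in both cases; and $(x+y)^s\lesssim_s x^s+y^s$ with $s=r-1$ and $s=r$ gives exactly the stated right-hand side, with all constants depending only on $r$. Be aware, however, that the paper itself contains no proof of this statement: it is quoted as \cite[Lemma 9]{levermore1997analyticity} and used as a black box, so there is no in-paper argument to compare against. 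Your fundamental-theorem-of-calculus route is a clean, self-contained substitute; a common alternative (in the spirit of the cited source) is to split $\xi^re^{\varphi\xi}-\eta^re^{\varphi\eta}=(\xi^r-\eta^r)e^{\varphi\xi}+\eta^r\big(e^{\varphi\xi}-e^{\varphi\eta}\big)$ and apply the mean value theorem to each piece, which is of comparable length but typically forces a case distinction on whether $\xi\ge\eta$; your observation that $t\le\max(\xi,\eta)\le\eta+\vert\xi-\eta\vert$ avoids that entirely.
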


The second preliminary result is the following.
\begin{lemma}
\label{lem:sum}
For $j:=\vert\bj\vert$, the sequence $j^{-2(r-\frac{3}{2})}$ is summable over $\bj\in \mathbb{Z}^d_{\bm{0}}= 2 \pi \mathbb{Z}^d\setminus\{\bm{0}\}$ for $r>\frac{d}{2}+\frac{3}{2}$.
\end{lemma}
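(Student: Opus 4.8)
The final statement to prove is Lemma~\ref{lem:sum}: for $j:=\vert\bj\vert$, the sequence $j^{-2(r-3/2)}$ is summable over $\bj\in\mathbb{Z}^d_{\bm 0}=2\pi\mathbb{Z}^d\setminus\{\bm 0\}$ provided $r>\tfrac{d}{2}+\tfrac{3}{2}$.

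\bigskip

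The plan is to reduce the lattice sum to a convergent $d$-dimensional integral by the standard integral comparison. First I would rewrite the exponent: $2(r-\tfrac32) = 2r-3 > d$ precisely when $r > \tfrac d2 + \tfrac32$, so the claim is equivalent to summability of $\vert\bj\vert^{-\alpha}$ over the (rescaled) lattice with $\alpha := 2r-3 > d$. Rescaling by $2\pi$ only changes the sum by the constant factor $(2\pi)^{-\alpha}$, so it suffices to treat $\sum_{\bn\in\mathbb{Z}^d\setminus\{\bm 0\}}\vert\bn\vert^{-\alpha}$.

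\bigskip

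Next I would group the lattice points by dyadic shells (or unit shells): for $m\ge 1$, the number of $\bn\in\mathbb{Z}^d$ with $m\le\vert\bn\vert < m+1$ is $O(m^{d-1})$ (it is bounded by the volume of an annulus of radius $\sim m$ and unit thickness, up to a dimensional constant). On such a shell $\vert\bn\vert^{-\alpha}\le m^{-\alpha}$, so
\begin{align*}
\sum_{\bn\in\mathbb{Z}^d\setminus\{\bm 0\}}\vert\bn\vert^{-\alpha}
\;\lesssim_d\; \sum_{m=1}^{\infty} m^{d-1}\,m^{-\alpha}
\;=\;\sum_{m=1}^{\infty} m^{-(\alpha-d+1)},
\end{align*}
which converges because $\alpha-d+1 > 1$ is equivalent to $\alpha>d$, i.e. to $r>\tfrac d2+\tfrac32$. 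Alternatively, and perhaps more cleanly, I would compare directly with $\int_{\vert\bx\vert\ge 1/2}\vert\bx\vert^{-\alpha}\dx$: each lattice point $\bn$ with $\vert\bn\vert\ge 1$ can be associated to the unit cube centered at $\bn$, on which $\vert\bx\vert \gtrsim \vert\bn\vert$ uniformly, and this integral is finite iff $\alpha>d$ by passing to polar coordinates, $\int_{1/2}^\infty \rho^{d-1-\alpha}\dd\rho<\infty$.

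\bigskip

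The only mild subtlety — hardly an obstacle — is handling the finitely many lattice points with $0<\vert\bn\vert<1$ when one sets up the cube-to-integral comparison; since there are none in $\mathbb{Z}^d$ other than those with $\vert\bn\vert\ge 1$ anyway (the nonzero integer points all have norm $\ge 1$), this is vacuous, and after the $2\pi$-rescaling the excluded origin is the only removed point, so no separate bookkeeping is needed. Thus the genuine content is simply the inequality $2r-3>d \iff r>\tfrac d2+\tfrac32$ together with the classical fact that $\sum_{\bn\in\mathbb{Z}^d\setminus\{0\}}\vert\bn\vert^{-\alpha}<\infty \iff \alpha>d$; I would present the dyadic-shell estimate above as the self-contained proof.
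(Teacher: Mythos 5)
Your proposal is correct, and the reduction is exactly the right one: $2(r-\tfrac32)>d$ iff $r>\tfrac d2+\tfrac32$, the $2\pi$-rescaling only costs a constant, and the shell count $\#\{\bn\in\mathbb{Z}^d:\ m\le|\bn|<m+1\}\lesssim_d m^{d-1}$ gives $\sum_{\bn\neq\bm{0}}|\bn|^{-\alpha}\lesssim\sum_m m^{d-1-\alpha}<\infty$ for $\alpha>d$. The route differs from the paper's: there the sum is written as a Riemann--Stieltjes integral $\int_1^{\sqrt R}s^{-2(r-3/2)}\,\mathrm{d}N(s)$ against the Gauss circle counting function $N(s)=\pi s^2+\mathcal{O}(s)$, followed by integration by parts and the limit $R\to\infty$, and the argument is carried out explicitly only for $d=2$ (with a remark that other dimensions follow by modifying $N(s)$). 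Your version is more elementary and more uniform: it needs only the crude one-shell volume bound rather than the circle-problem asymptotics, it treats all $d$ at once, and it avoids the Stieltjes bookkeeping (where the paper's write-up in fact has small slips, e.g.\ limits written as $R\to0$ and a stray $\mathcal{O}(1)$ in the final identity). What the paper's approach buys in exchange is an essentially sharp evaluation of the sum's size in $d=2$, namely the explicit constant $\pi(2r-3)/(2r-5)$ up to lower-order terms, which your comparison argument does not produce; since the lemma only asserts summability, either proof suffices.
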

\begin{proof}
In the following, we work in dimension $d=2$ for simplicity so that $r>\frac{5}{2}$. The extension to arbitrary dimensions follows by modifying $N(s)$ below.
\\
Let $\bj  \in \mathbb{Z}^2_{\bm{0}}= 2 \pi \mathbb{Z}^2\setminus\{\bm{0}\}$ with $\vert \bj\vert =j$. If we consider $N(s)=\pi s^2 +\mathcal{O}(s)$, the number of lattice points in the circle of radius $s>0$ (Gauss circle problem), then since $j\geq1$, we have that
\begin{align*}
\sum_{\bj \in \mathbb{Z}^2_{\bm{0}}} j^{-2(r-\frac{3}{2})} 
=
\lim_{R\rightarrow0}\sum_{\substack{ \bj \in \mathbb{Z}^2_{\bm{0}} \\  1\leq j^2\leq R}} j^{-2(r-\frac{3}{2})} 
=
\lim_{R\rightarrow0}\int_1^{\sqrt{R}} s^{-2(r-\frac{3}{2})}\dd N(s).
\end{align*}
However, for $r>\frac{5}{2}$,
\begin{align*}
\int_1^{\sqrt{R}} s^{-2(r-\frac{3}{2})}\dd N(s)
&=
s^{-2(r-\frac{3}{2})} N(s)\bigg\vert_1^{\sqrt{R}} 
+
2\big(r-\frac{3}{2}\big)
\int_1^{\sqrt{R}} s^{-2(r-1)}N(s)\ds
\\
&=
 \pi R^{-(r-\frac{5}{2})}
 +
2\big(r-\frac{3}{2}\big)
 \left[ \frac{-\pi}{2r-5} s^{-2(r-\frac{5}{2})}   \right]_1^{\sqrt{R}}
  + \mathcal{O}(1)
 \\
&=
 \pi R^{-(r-\frac{5}{2})} 
+
\frac{-\pi(2r-3)}{2r-5} \left[  R^{-(r-\frac{5}{2})} -1 \right]  + \mathcal{O}(1).
\end{align*}
Because $r>\frac{5}{2}$, passing to the limit $R\rightarrow\infty$, yields
\begin{align*}
\sum_{\bj \in \mathbb{Z}^2_{\bm{0}}} j^{-2(r-\frac{3}{2})} = \frac{\pi(2r-3)}{2r-5}  + \mathcal{O}(1).
\end{align*}
and thus, summable.
\end{proof}
Getting back to the proof of Lemma \ref{lem:ConvEst}, we begin by setting $\bz:= \Lambda^{r/2} e^{\varphi  A^{1/2}  }\bw$ so that
\begin{align*}
\big\langle
\Lambda^{r/2}   e^{\varphi  \Lambda^{1/2}  }
     ( \bu  \cdot\nabla) \bv
,\,   \Lambda^{r/2}   e^{\varphi  \Lambda^{1/2}  }\bw
\big\rangle 
=&
I_1+I_2
\end{align*}
where
\begin{align*}
I_1=&
\big\langle
    ( \bu  \cdot\nabla)(
 \Lambda^{r/2}   e^{\varphi  \Lambda^{1/2}  } \bv)
,\,   \Lambda^{r/2} e^{\varphi  A^{1/2}  }\bw
\big\rangle 
\\
I_2=&
\big\langle 
     ( \bu  \cdot\nabla) \bv
,\,   \Lambda^{r/2}   e^{\varphi  \Lambda^{1/2}  }\bz 
\big\rangle 
-I_1.
\end{align*} 
To simplify notations, for $\bj,\bk,\bl \in \mathbb{Z}^d_{\bm{0}}= 2 \pi \mathbb{Z}^d\setminus\{\bm{0}\}$,
we let  $j:=\vert\mathbf{j}\vert$,  $k:=\vert\mathbf{k}\vert$ and
 $\ell:=\vert\bl\vert$ and note that for $\bn \in 2\pi \mathbb{Z}^d$,
\begin{align*}
\frac{1}{(2\pi)^d}\int_{\mathbb{T}^d}e^{i\bn\cdot\bx}\dx
= 
  \begin{cases}
    0      & \quad \text{if } \bn\neq\bm{0}\\
    1  & \quad \text{if } \bn=\bm{0}.
  \end{cases}
\end{align*}
With this information, we are able to rewrite $I_1$ as follows:
\begin{align*}
I_1&=
\Big\langle 
     \sum_{\bj}  \hat{\bu}_{\bj} e^{i\bj\cdot\bx} \cdot \sum_{\bk}i\bk\,k^re^{\varphi  k }\hat{\bv}_\bk e^{i\bk\cdot\bx}
\,,\,  
 \sum_{\bl}\ell^re^{\varphi  \ell  }\hat{\bw}_{\bl} e^{i\bl\cdot\bx} 
\Big\rangle 
\\&=i(2\pi)^d
\sum_{\bn}
\sum_{\bj+\bk+\bl=\bn}
  \hat{\bu}_{\bj} \cdot  \bk\, k^re^{\varphi  k }\ell^re^{\varphi  \ell  }\hat{\bv}_\bk\cdot\hat{\bw}_{\bl}\frac{1}{(2\pi)^d}\int_{\mathbb{T}^d} e^{i\bn\cdot\bx} \dx 
  \\&=i (2\pi)^d
\sum_{\bj+\bk+\bl=\bm{0}}
   \hat{\bu}_{\bj} \cdot  \bk\, k^re^{\varphi  k }\ell^re^{\varphi  \ell  }\hat{\bv}_\bk\cdot\hat{\bw}_{\bl}.
\end{align*}
%
However, note that $\vert\bk\vert\leq k^\frac{1}{2}(\ell j)^\frac{1}{2} $ (since $j,k,\ell\geq1$, it follows that $k \leq k\ell$ ) and so
\begin{align*}
\vert\bk\vert k^r\ell^r\leq j^\frac{1}{2}k^{r+\frac{1}{2}}\ell^{r+\frac{1}{2}}
=j^{r-1}j^{-(r-\frac{3}{2})}k^{r+\frac{1}{2}}\vert\bj+\bk\vert^{r+\frac{1}{2}}.
\end{align*}
Consequently, it follows that
\begin{align*}
\vert I_1\vert &\lesssim
\sum_{\bj} j^{r-1}j^{-(r-\frac{3}{2})}
\hat{\bu}_{\bj}
\sum_{\bj+\bk \neq\bm{0}}
 k^{r+\frac{1}{2}}e^{\varphi  k }\vert\hat{\bv}_\bk\vert
\vert\bj+\bk\vert^{r+\frac{1}{2}}e^{\varphi  \vert\bj+\bk\vert}\vert\hat{\bw}_{ \bj+\bk }\vert
   \\&
   \lesssim
    \Big(  \sum_{\bj} j^{-2(r-\frac{3}{2})}  \Big)^\frac{1}{2}
  \Big(  \sum_{\bj}j^{2(r-1)}  \vert  \hat{\bu}_{\bj}\vert^2 \Big)^\frac{1}{2}
\Big(\sum_{ \bk}
    k^{2(r+\frac{1}{2})} e^{2\varphi  k }\vert\hat{\bv}_\bk\vert^2\Big)^\frac{1}{2}
 \\&
 \qquad\quad\times
 \Big(\sum_{\bj+\bk \neq\bm{0}}  \vert\bj+\bk\vert^{2(r+\frac{1}{2})}e^{2\varphi \vert\bj+\bk\vert}\vert\hat{\bw}_{ \bj+\bk }\vert^2\Big)^\frac{1}{2} 
  \\&\lesssim
    \varphi \Vert   e^{\varphi  \Lambda^{1/2}  }\bu\Vert _{H^{r}}
\Vert   e^{\varphi  \Lambda^{1/2}  }\bv \Vert _{H^{r+\frac{1}{2}}}   \Vert     e^{\varphi  \Lambda^{1/2}  }\bw \Vert _{H^{r+\frac{1}{2}} }
\end{align*}
where in the last step, we use the summability of $ j^{-2(r-\frac{3}{2})}$ (see Lemma \ref{lem:sum}) and also use the estimate $1\lesssim xe^x$ for $x=\varphi j>0$ inside the $\hat{\bu}_{\bj}$-term . 
We now estimate $I_2$. 
Similar to $I_1$, we obtain
\begin{align*}
\big\langle 
     ( \bu  \cdot\nabla) \bv
&\,,\,   \Lambda^{r/2}   e^{\varphi  \Lambda^{1/2}  }\bz 
\big\rangle 
\\&=
\Big\langle 
     \sum_{\bj}  \hat{\bu}_{\bj} e^{i\bj\cdot\bx} \cdot \sum_{\bk}i\bk\,\hat{\bv}_\bk e^{i\bk\cdot\bx}
\,,\,  
 \sum_{\bl}\ell^re^{\varphi  \ell  }\hat{\bz}_{\bl} e^{i\bl\cdot\bx} 
\Big\rangle 
\\&=i(2\pi)^d
\sum_{\bn}
\sum_{\bj+\bk+\bl=\bn}
  \hat{\bu}_{\bj} \cdot  \bk\,\hat{\bv}_\bk\cdot \ell^re^{\varphi  \ell  }\hat{\bz}_{\bl}\frac{1}{(2\pi)^d}\int_{\mathbb{T}^d} e^{i\bn\cdot\bx} \dx 
  \\&=i (2\pi)^d
\sum_{\bj+\bk+\bl=\bm{0}}
   \hat{\bu}_{\bj}  \cdot  \bk\,\hat{\bv}_\bk\cdot \ell^re^{\varphi  \ell  }\hat{\bz}_{\bl}.
\end{align*}
Subtracting the identity for $I_1$ from this results in
\begin{align*}
I_2 =
i (2\pi)^d
\sum_{\bj+\bk+\bl=\bm{0}}
  \hat{\bu}_{\bj}   \cdot  \bk\,\hat{\bv}_\bk \cdot\hat{\bz}_{\bl}(\ell^re^{\varphi  \ell  }-k^re^{\varphi  k  }) 
\end{align*} 
where since $\bz= \Lambda^{r/2} e^{\varphi  \Lambda^{1/2}  }\bw$, we have
\begin{align*}
\vert \hat{\bz}_{\bl}\vert= \ell^{r} e^{\varphi  \ell}\vert \hat{\bw}_{\bl}\vert \lesssim  \ell^{r}\vert \hat{\bw}_{\bl}\vert +  \ell^{r}e^{\varphi  \ell}\vert \hat{\bw}_{\bl}\vert\,\varphi (j+k)
=
 \ell^{r}\vert \hat{\bw}_{\bl}\vert +   \varphi (j+k)\vert \hat{\bz}_{\bl}\vert.
\end{align*} 
We also note that by using Lemma \ref{lem:algebraIneq}, we obtain
\begin{align*}
 \big\vert
\ell^re^{\ell} 
-
k^re^{k} \big\vert
&\lesssim
\vert \ell-k\vert \big[(\vert \ell-k\vert^{r-1} +   k^{r-1}) +\varphi(\vert \ell-k\vert^r + k^r)e^{\varphi(\vert \ell-k\vert +k)}  \big]
\\
&\lesssim
j \big[(j^{r-1}+k^{r-1}) +\varphi(j^r +k^r)e^{\varphi(j+k)}  \big]
\end{align*}
Combining the two relations yields
\begin{align*}
\vert \hat{\bz}_{\bl}\vert
\,  \big\vert
\ell^re^{\ell} 
-
k^re^{k} \big\vert
&\lesssim
j\,\vert \hat{\bz}_{\bl}\vert
 \big[(j^{r-1}+k^{r-1}) +\varphi(j^r +k^r)e^{\varphi(j+k)}  \big]
\\&\lesssim
j\big( \ell^{r}\vert \hat{\bw}_{\bl}\vert +   \varphi (j+k)\vert \hat{\bz}_{\bl}\vert\big)
(j^{r-1}+k^{r-1})
 +
 \varphi\,j\,\vert \hat{\bz}_{\bl}\vert(j^r +k^r)e^{\varphi(j+k)}  
 \\&\lesssim_r
j \ell^{r}\vert \hat{\bw}_{-(\mathbf{j}+\mathbf{k})}\vert  (j^{r-1}+k^{r-1}) 
 +\varphi\,j\,\vert \hat{\bz}_{-(\mathbf{j}+\mathbf{k})}\vert(j^r +k^r)e^{\varphi(j+k)}  
\end{align*}
where in the last line, we have used Young's inequality to obtain
\begin{align*}
(j+k)(j^{r-1}+k^{r-1})\lesssim_r (j^r+k^r) \lesssim_r(j^r+k^r)e^{\varphi(j+k)}.
\end{align*}
as well as the identity $\hat{\bz}_{\bl}=\hat{\bz}_{-(\mathbf{j}+\mathbf{k})}$ which holds since $\mathbf{j}+\mathbf{k}+\bl=\bm{0}$.
Consequently, we obtain
\begin{align*}
I_2\lesssim&
\sum_{ 
\mathbf{j}+\mathbf{k}+\bl=\bm{0} } 
k \vert\hat{\bu}_\mathbf{j}\vert  \vert \hat{\bv}_\mathbf{k}\vert
\,\vert \hat{\bz}_{\bl}\vert
\,  \big\vert
\ell^re^{\ell} 
-
k^re^{k} \big\vert\lesssim I_2^A+I_2^B
\end{align*}
where
\begin{align*}
 I_2^A&:=
\sum_{ 
\mathbf{j}+\mathbf{k}+\bl=\bm{0} } 
\ell^r \vert\hat{\bu}_\mathbf{j}\vert  \vert \hat{\bv}_\mathbf{k}\vert\,\vert \hat{\bw}_{-(\mathbf{j}+\mathbf{k})}\vert
(j^rk+k^rj),
\\
 I_2^B&:=
\sum_{ 
\mathbf{j}+\mathbf{k}+\mathbf{l}=\bm{0}
 } 
\varphi  \vert\hat{\bu}_\mathbf{j}\vert  \vert \hat{\bv}_\mathbf{k}\vert\,\vert \hat{\bz}_{-(\mathbf{j}+\mathbf{k})}\vert
(j^{r+1}k+k^{r+1}j)  e^{\varphi(j+k)}.
\end{align*}
Now given that $\vert\hat{\bw}_{ -(\mathbf{j}+\mathbf{k})}\vert = \vert\overline{\hat{\bw}}_{ \mathbf{j}+\mathbf{k}}\vert=\vert \hat{\bw}_{ \mathbf{j}+\mathbf{k}}\vert$, we note that
\begin{align*}
I_2^A&\leq
\sum_{\bk} 
k \,\vert \hat{\bv}_\mathbf{k}\vert
\sum_{\bj+\bk \neq\bm{0}}
\vert \mathbf{j}+\mathbf{k}\vert^r \vert\hat{\bu}_\mathbf{j}\vert  \vert \hat{\bw}_{ \mathbf{j}+\mathbf{k}}\vert
j^r
\\&+
\sum_{\bj} 
j  \vert\hat{\bu}_\mathbf{j}\vert  
\sum_{\bj+\bk \neq\bm{0}}
\vert \mathbf{j}+\mathbf{k}\vert^r
\,\vert \hat{\bv}_\mathbf{k}\vert
\vert \hat{\bw}_{ \mathbf{j}+\mathbf{k}}\vert
k^r
\end{align*}
where
\begin{align*}
\sum_{\bk} &
k \,\vert \hat{\bv}_\mathbf{k}\vert
\sum_{\bj+\bk \neq\bm{0}} 
\vert \mathbf{j}+\mathbf{k}\vert^r \vert\hat{\bu}_\mathbf{j}\vert  \vert \hat{\bw}_{ \mathbf{j}+\mathbf{k}}\vert
j^r
\\&\lesssim
\Big(
\sum_{\bk} 
k^{2(1-r)}
\Big)^\frac{1}{2}
\Big(
\sum_{\bk} 
k^{2r} \,\vert \hat{\bv}_\mathbf{k}\vert^2
\Big)^\frac{1}{2}
\Big(
\sum_{\bj} 
j^{2r} \vert\hat{\bu}_\mathbf{j}\vert^2  \Big)^\frac{1}{2}
\Big(
\sum_{\bj+\bk \neq\bm{0}}
\vert \mathbf{j}+\mathbf{k}\vert^{2r}
\vert \hat{\bw}_{ \mathbf{j}+\mathbf{k}}\vert^2
\Big)^\frac{1}{2}
\\&\lesssim
\Big(
\sum_{\bk} 
k^{2r} \,\vert \hat{\bv}_\mathbf{k}\vert^2
\Big)^\frac{1}{2}
 \Big(
\sum_{\bj} 
j^{2r}\vert\hat{\bu}_\mathbf{j}\vert^2 
\Big)^\frac{1}{2} 
\Big(
\sum_{\bj+\bk \neq\bm{0}}
\vert \mathbf{j}+\mathbf{k}\vert^{2r}
\vert \hat{\bw}_{ \mathbf{j}+\mathbf{k}}\vert^2
\Big)^\frac{1}{2}
\\&\lesssim
\Vert \bv\Vert_{H^r}
 \Vert\bu\Vert_{H^r}  
\Vert \bw\Vert_{H^r}.
\end{align*} 
The same estimate holds for the second summand in $I_2^A$ so that we have
\begin{align*}
I_2^A 
&\lesssim
\Vert \bv\Vert_{H^r}
 \Vert\bu\Vert_{H^r}  
\Vert \bw\Vert_{H^r}.
\end{align*} 
Next, we rewrite $I_2^B$ as
\begin{align*}
 I_2^B:=&
\sum_{ 
\mathbf{j}+\mathbf{k}+\bl=\bm{0} } 
\varphi  \vert\hat{\bu}_\mathbf{j}\vert   \vert \hat{\bv}_\mathbf{k}\vert\,\vert \hat{\bz}_{\mathbf{j}+\mathbf{k}}\vert
j^{r+1}k e^{\varphi(j+k)}
\\
&+
\sum_{ 
\mathbf{j}+\mathbf{k}+\bl=\bm{0} } 
\varphi  \vert\hat{\bu}_\mathbf{j}\vert  \vert \hat{\bv}_\mathbf{k}\vert\,\vert \hat{\bz}_{\mathbf{j}+\mathbf{k}}\vert
k^{r+1}j  e^{\varphi(j+k)}.
\end{align*}
 Now, by recalling that $\bz= \Lambda^{r/2} e^{\varphi  \Lambda^{1/2}  }\bw$ and $j^\frac{1}{2}\leq \ell^\frac{1}{2}k^\frac{1}{2}$ (note that $j,k,\ell\geq1$) , we obtain
\begin{align*}
\sum_{ 
\mathbf{j}+\mathbf{k}+\bl=\bm{0} }&
\varphi \vert\hat{\bu}_\mathbf{j}\vert   \vert \hat{\bv}_\mathbf{k}\vert\,\vert \hat{\bz}_{\mathbf{j}+\mathbf{k}}\vert
j^{r+1}k e^{\varphi(j+k)}
\\&\leq
\varphi
\sum_{\bk} 
 k^\frac{3}{2} e^{\varphi k}  \vert \hat{\bv}_\mathbf{k}\vert
\sum_{\bj+\bk \neq\bm{0}}
 j^{r +\frac{1}{2}} e^{\varphi j}
 \vert\hat{\bu}_\mathbf{j}\vert \vert\mathbf{j}+\mathbf{k}\vert^{r+\frac{1}{2}}e^{\varphi \vert\mathbf{j}+\mathbf{k}\vert} \vert \hat{\bw}_{\mathbf{j}+\mathbf{k}}\vert
\\&
\lesssim
\varphi
\Big(
\sum_{\bk} 
\frac{1}{k^{2(r-\frac{3}{2})}}
\Big)^\frac{1}{2}
\Big(
\sum_{\bk} 
k^{2r} e^{2\varphi k}\vert \hat{\bv}_\mathbf{k}\vert^2
\Big)^\frac{1}{2}
\Big(
\sum_{\bj} 
j^{2(r+\frac{1}{2})}e^{2\varphi j} \vert\hat{\bu}_\mathbf{j}\vert^2 \Big)^\frac{1}{2}
\\&
\qquad\quad\times
\Big(
\sum_{\bj+\bk \neq\bm{0}}
\vert \mathbf{j}+\mathbf{k}\vert^{2(r+\frac{1}{2})}e^{2\varphi \vert \mathbf{j}+\mathbf{k}\vert}
\vert \hat{\bw}_{ \mathbf{j}+\mathbf{k}}\vert^2
\Big)^\frac{1}{2}
\\&\lesssim
\varphi
\Vert e^{\varphi \Lambda^{1/2}} \bv\Vert_{H^r}
 \Vert e^{\varphi \Lambda^{1/2}}\bu\Vert_{H^{r+\frac{1}{2}}}  
\Vert  e^{\varphi \Lambda^{1/2}}\bw\Vert_{H^{r+\frac{1}{2}}}.
\end{align*}
 for $r>2$. Similarly, we can use  $k^\frac{1}{2}\leq \ell^\frac{1}{2}j^\frac{1}{2}$ to obtain
\begin{align*}
\sum_{ 
\mathbf{j}+\mathbf{k}+\bl=\bm{0} } &
\varphi  \vert\hat{\bu}_\mathbf{j}\vert   \vert \hat{\bv}_\mathbf{k}\vert\,\vert \hat{\bz}_{\mathbf{j}+\mathbf{k}}\vert
k^{r+1}j  e^{\varphi(j+k)}
\\&\leq
\varphi
\sum_{\bj} 
 j^{\frac{3}{2}} e^{\varphi j}
 \vert\hat{\bu}_\mathbf{j}\vert
 \sum_{\bj+\bk \neq\bm{0}}
 k^{r+\frac{1}{2}} e^{\varphi k}  \vert \hat{\bv}_\mathbf{k}\vert
 \vert\mathbf{j}+\mathbf{k}\vert^{r +\frac{1}{2}}e^{\varphi \vert\mathbf{j}+\mathbf{k}\vert} \vert \hat{\bw}_{\mathbf{j}+\mathbf{k}}\vert
\\&
\lesssim
\varphi
\Big(
\sum_{\bj} 
\frac{1}{j^{2(r-\frac{3}{2})}}
\Big)^\frac{1}{2}
\Big(
\sum_{\bj} 
j^{2r} e^{2\varphi j}\vert \hat{\bu}_\mathbf{j}\vert^2
\Big)^\frac{1}{2}
\Big(
\sum_{\bk} 
k^{2(r+\frac{1}{2})}e^{2\varphi k} \vert\hat{\bv}_\mathbf{k}\vert^2 \Big)^\frac{1}{2}
\\&
\qquad\quad\times
\Big(
\sum_{\bj+\bk \neq\bm{0}}
\vert \mathbf{j}+\mathbf{k}\vert^{2(r+\frac{1}{2})}e^{2\varphi \vert \mathbf{j}+\mathbf{k}\vert}
\vert \hat{\bw}_{ \mathbf{j}+\mathbf{k}}\vert^2
\Big)^\frac{1}{2}
\\&\lesssim
\varphi
\Vert e^{\varphi \Lambda^{1/2}} \bu\Vert_{H^{r }}
 \Vert e^{\varphi \Lambda^{1/2}}\bv\Vert_{H^{r+\frac{1}{2}}}  
\Vert  e^{\varphi \Lambda^{1/2}}\bw\Vert_{H^{r+\frac{1}{2}}}
\end{align*}
so that
\begin{align*}
I_2^B
\lesssim&
\varphi
\Big(
\Vert e^{\varphi \Lambda^{1/2}} \bv\Vert_{H^r}
 \Vert e^{\varphi \Lambda^{1/2}}\bu\Vert_{H^{r+\frac{1}{2}}}  
+
\Vert e^{\varphi \Lambda^{1/2}} \bu\Vert_{H^{r }}
 \Vert e^{\varphi \Lambda^{1/2}}\bv\Vert_{H^{r+\frac{1}{2}}} 
 \Big) 
\Vert  e^{\varphi \Lambda^{1/2}}\bw\Vert_{H^{r+\frac{1}{2}}}.
\end{align*} 
Combining the estimates for $\vert I_1\vert$, $I_2^A$ and $I_2^B$ above yields the desired result.
\section*{Statements and Declarations} 
\subsection*{Author Contribution}
The author wrote and reviewed the manuscript.
\subsection*{Conflict of Interest}
The author declares that they have no conflict of interest.
\subsection*{Data Availability Statement}
Data sharing is not applicable to this article as no datasets were generated
or analyzed during the current study.
\subsection*{Competing Interests}
The author have no competing interests to declare that are relevant to the content of this article.
%
%
%

\end{document}